 \def\MR#1{} 
\let\cal=\mathcal      
\def\mcc{M\raise.5ex\hbox{c}C}
\def\mccarthy{M\raise.5ex\hbox{c}Carthy}
\def\Hu{\H}
\def\sz{Szeg\Hu{o} }
\def\ie{{\it i.e. }}
\def\h{{\cal H}}
\def\hk{{\cal H}_k}
\def\hs{{\cal H}_s}
\def\M{{\cal M}}
\def\m{Mult}
\def\l{\lambda}
\def\vare{\varepsilon}
\let\i=\infty
\def\la{\langle}
\def\ra{\rangle}
\def\={\ = \ }
\def\F{{\cal F}}
\def\E{E_\l}
\def\C{\mathbb C}
\def\D{\mathbb D}
\def\B{\mathbb B}
\def\dis{\displaystyle}
\def\be{\setcounter{equation}{\value{theorem}} \begin{equation}}
\def\ee{\end{equation} \addtocounter{theorem}{1}}
\def\beq{\begin{eqnarray*}}
\def\eeq{\end{eqnarray*}}
\def\bp{{\sc Proof: }}
\def\ep{{}{\hfill $\Box$} \vskip 5pt \par}
\def\bl{\begin{lemma}}
\def\el{\end{lemma}}
\def\bt{\begin{theorem}}
\def\et{\end{theorem}}
\def\bprop{\begin{prop}}
\def\eprop{\end{prop}}
\def\bd{\begin{definition}}
\def\ed{\end{definition}}
\def\br{\begin{remark}}
\def\er{\end{remark}}
\def\bexer{\begin{exercise}}
\def\eexer{\end{exercise}}
\newtheorem{theorem}{Theorem}[section]
\newtheorem{proposition}[theorem]{Proposition}
\newtheorem{lemma}[theorem]{Lemma}
\newtheorem{definition}[theorem]{Definition}
\newtheorem{question}[theorem]{Question}
\newtheorem{example}[theorem]{Example}
\newcommand\hkl{{\hat{k}_\lambda}}
\newcommand\mn{{\mathbb M}_n}
\newcommand\cB{{\mathcal B}}
\renewcommand\L{\mathcal L}
\newcommand\hl{\hat{\ell}}
\newcommand\cp{s}
\newcommand{\cH}{\mathcal H}
\newcommand{\bN}{\mathbb N}
\DeclareMathOperator{\Mult}{Mult}
\renewcommand\E{\mathcal E}
\renewcommand{\m}{\operatorname{Mult}}
\numberwithin{equation}{section}
\title{Interpolating sequences in spaces with the complete Pick property}
\author{Alexandru Aleman
\\ Lund University
\and
Michael Hartz
\thanks{Partially supported by an Ontario Trillium Scholarship and a Feodor Lynen Fellowship}
\\ Washington University 
\and
John E. M\raise.5ex\hbox{c}Carthy
\thanks{Partially supported by National Science Foundation Grant
DMS 1565243}
\\ Washington University 
\and
Stefan Richter
\\ University of Tennessee
}
\begin{document}

\bibliographystyle{plain}
\maketitle

\begin{abstract}
  We characterize interpolating sequences for multiplier algebras of spaces with the complete Pick property.
  Specifically, we show that a sequence is interpolating if and only if it is  separated
  and generates a Carleson measure. This generalizes results of Carleson for the Hardy space
  and of Bishop, Marshall and Sundberg for the Dirichlet space. Furthermore,
  we investigate interpolating sequences for pairs of Hilbert function spaces.
\end{abstract}

\section{Introduction}
\label{seca}

Let $\h$ be a reproducing kernel Hilbert space on a set $X$. In this paper we will always assume that the reproducing kernel $k$ satisfies $\|k_z\|^2=k_z(z)\ne 0$ for all $z\in X$.
Let $\m(\h)$ denote the multiplier algebra  of $\h$, that is the set of functions
$\phi$ on $X$ with the property that multiplication by $\phi$ maps $\h$ to $\h$.
A sequence $(\lambda_i) \subseteq X$ is called an {\em interpolating sequence} for $\m(\h)$  if,
whenever $(w_i)$ is a bounded sequence of complex numbers, there is a multiplier $\phi$
such that $\phi(\lambda_i) = w_i$ for each $i$. Furthermore, the sequence  $(\lambda_i) \subseteq X$ is called an {\em interpolating sequence} for $\h$  if the operator $$
T: f \mapsto\left(\frac{f(\lambda_i)}{\|k_{\lambda_i}\|}\right) $$
 takes $\h$ boundedly onto $\ell^2$.

L. Carleson classified interpolating sequences for $H^\infty,$ the multiplier algebra of the Hardy space $H^2$ on the unit disk, in 1958
\cite{car58}. H.S. Shapiro and A. Shields found a different proof of Carleson's theorem and showed that the interpolating sequences for $H^\infty$ are the same as the ones for $H^2$ \cite{SS61}. This idea was used
 by D. Marshall and C. Sundberg \cite{marsun} and C. Bishop \cite{bis} to characterize interpolating sequences
in the context of the Dirichlet space on the unit disk.

The key property shared by the Hardy space and the Dirichlet space that allowed this translation to work
is the Pick property. We define an irreducible complete Nevanlinna-Pick kernel in Section~\ref{secb},
but by Theorem \ref{thmb1}, it  is a reproducing kernel $s$ of the form
\begin{equation}
\label{ncp}
s_w(z)=\frac{f(z)\overline{f(w)}}{1-\langle b(z),b(w)\rangle_{\mathcal K}}
\end{equation}
where $f:X\to \C$ is  non-vanishing and $b$ is a function from $X$ into the open unit ball of an auxiliary Hilbert space ${\mathcal K}$.
If, in addition, $f$ is identically $1$ and there is a point $w_0 \in X$ such that
$b(w_0) = 0$, we will call \eqref{ncp}
a {\em normalized complete Pick kernel}.
The normalization is convenient for our proofs, but is not essential.
The Hardy and Dirichlet spaces of the unit disc are examples of spaces with normalized complete Pick kernels. In Section \ref{secb} we will provide  further examples as well as motivation and background about reproducing kernels of this type.

Interpolating sequences are characterized by  separation  and  Carleson measure conditions.
If $\h$ is a reproducing kernel Hilbert space with kernel $k$, then $$d_\h(z,w) = \sqrt{1-\frac{|\la k_z,k_w\ra|^2}{\|k_z\|^2\|k_w\|^2}},\ \ z,w\in X$$ defines a pseudo metric on $X$. Indeed, it is clearly symmetric, it satisfies the triangle inequality (see \cite[Lemma 9.9]{ampi}), and it satisfies $d_\h(z,w)=0$ if and only if $k_z$ and $k_w$ are linearly dependent. As an  example consider $\h=H^2$, then $d_{H^2}(z,w)=|\frac{z-w}{1-\overline{z}w}|$ is the pseudohyperbolic metric on the unit disk. The sequence $(\lambda_i)$ is called {\em $\h$-separated }, if there is $c>0$ such that
\[\tag{S}d_\h(\lambda_n,\lambda_m) \ge c>0\ \ \text{for all } \ n\ne m.\]

We shall say the sequence is strongly separated (SS) if
 there exists $\vare > 0$ such that
for every $ i$ there exists a function $\phi$ in the unit ball of $\m(\h)$ such that
$\phi(\l_i) = \varepsilon$ and $\phi(\l_j) = 0 \ \forall\ j \neq i$. It is easy to see that for $\h=H^2$ condition (SS) is equivalent to the existence of $\vare>0$  such that for each $i$ $\prod_{i\ne j}|\frac{\lambda_i-\lambda_j}{1-\overline{\lambda_j}\lambda_i}|\ge \vare$.

Finally, the sequence $(\lambda_i)$ satisfies  the Carleson measure condition if
there exists $c > 0$ such that
\[\tag{C}\sum_i\frac{|f(\lambda_i)|^2}{\|k_{\lambda_i}\|^2} \le c\|f\|^2, \ \ f\in \h.\]
If $\delta_\lambda$ denotes the unit point mass at the point $\lambda\in X$, then this condition is saying that $\mu= \sum_i \frac{1}{\|k_{\lambda_i}\|^2} \delta_{\lambda_i}$ is a Carleson measure for the Hilbert space $\h$.

Thus we have the following conditions that a sequence $(\lambda_i)\subseteq X $ may satisfy for a given space $\h$:
\begin{itemize}

 \item[(IM)] the sequence is interpolating for  $\m(\h)$,

\item[(IH)] the sequence is interpolating for  $\h$,

 \item[(S+C)] \quad the sequence is $\h$-separated and satisfies the Carleson measure condition for $\h$,

 \item[(SS)] the sequence is strongly separated.
 \end{itemize}

Carleson proved that for $\h=H^2$ conditions (IM), (S+C), and (SS) are equivalent, and, of course, he also established a geometric Carleson measure condition for $H^2$. Shapiro and Shields added to this the equivalence of (IH).

By now it is well-known  that the implications (IM) $\Rightarrow$ (IH) and (IH) $\Rightarrow$ (S+C) hold for every reproducing kernel Hilbert space (see \cite[Chapter 9]{ampi}). The implication (IH) $\Rightarrow$ (IM)  for all irreducible complete Pick kernels was proved by Marshall-Sundberg \cite{marsun} and by Bishop \cite{bis}. Both Marshall-Sundberg and Bishop also construct examples to show that (SS) does not imply (IM) in the case of the Dirichlet space of the unit disc. In \cite[Theorem 9.43]{ampi} it is shown  that (S+C) implies (SS) for all irreducible complete Pick kernels.

The main result of this note addresses the remaining open implication in the generality of spaces with the complete Pick property.

\begin{theorem}\label{CNPintThm} The implication (S+C) $\Rightarrow$ (IM) holds for all reproducing kernel Hilbert spaces with irreducible complete Nevanlinna-Pick kernels.
\end{theorem}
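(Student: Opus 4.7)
My plan is to reduce (IM) to an operator inequality for the normalized Grammian via Pick's theorem, and then prove this inequality by showing the Grammian is bounded and invertible on $\ell^2$. By the open mapping theorem, (IM) is equivalent to the existence of a uniform constant $M$ such that every $(w_i)\in\ell^\infty$ with $\|w\|_\infty\le 1$ is interpolated by some $\phi\in\m(\h)$ with $\|\phi\|_\m\le M$. Because the kernel is a complete Nevanlinna-Pick kernel, Pick's theorem applied to each finite initial segment, combined with a weak-$*$ compactness argument in $B(\h)$, translates this uniform interpolation into the matrix condition
\[
[(M^2 - w_i\bar w_j)\, k_{\lambda_i}(\lambda_j)]_{i,j=1}^n \ge 0
\]
for all $n$ and $|w_i|\le 1$. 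Conjugating by $D=\mathrm{diag}(\|k_{\lambda_i}\|)$ rewrites this as the equivalent operator inequality $M^2 G \ge W^* G W$ on $\ell^2$, where $G_{ij}=\langle \hat k_{\lambda_j},\hat k_{\lambda_i}\rangle$ is the normalized Grammian and $W=\mathrm{diag}(w_i)$.

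If $G$ is bounded and bounded below on $\ell^2$, say $cI\le G\le CI$, then $\|W^*GW\|\le \|w\|_\infty^2 \|G\|\le C$, so $W^*GW\le CI\le (C/c)G$ and $M=\sqrt{C/c}$ suffices. Thus (IM) reduces to showing that $G$ is bounded and invertible on $\ell^2$. Boundedness of $G$ is precisely the Carleson condition (C), since $G=TT^*$ for the sampling operator $T\colon\h\to\ell^2$, $Tf=(\langle f,\hat k_{\lambda_i}\rangle)_i$, and (C) is the statement that $T$ is bounded. (This also recovers, and builds on, the already-known equivalence (IH) $\Leftrightarrow$ $G$ is bounded and invertible.)

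The heart of the argument, and the step I expect to be hardest, is to show that (S), (C) and the CPNP structure together force $G$ to be bounded below. Separation gives the pointwise bound $|G_{ij}|^2 = 1 - d_\h(\lambda_i,\lambda_j)^2 \le 1-c^2$ for $i\ne j$, and testing (C) on $f=k_{\lambda_j}$ yields the column summability $\sum_i |G_{ij}|^2 \le C_1$, but these alone do not yield $\|G-I\|<1$ or a Schur-type bound. My plan is to adapt Shapiro and Shields' strategy for $H^2$ to the abstract CPNP setting: exploit the factorization $s_w(z)=f(z)\overline{f(w)}/(1-\langle b(z),b(w)\rangle_{\mathcal K})$ together with a doubling-type property of $d_\h$ (extracted from the Carleson condition) to partition the index set into finitely many sub-sequences that are uniformly very strongly $d_\h$-separated, observe that on each such piece the sub-Grammian is a small perturbation of the identity and hence invertible, and assemble the pieces into a uniform lower bound on the full $G$ by using the column summability from (C) to control the cross terms. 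The principal technical hurdle is performing this combinatorial decomposition in the abstract CPNP setting, where the explicit pseudohyperbolic geometry of the disk is unavailable and one must work directly with the embedding $b\colon X\to \mathrm{ball}(\mathcal K)$.
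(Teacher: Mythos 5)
Your reduction is correct as far as it goes: using Pick's theorem (extended to infinite sequences by compactness), a uniformly bounded solution to the interpolation problem is equivalent to the matrix inequality $M^2 G \ge W^* G W$ for all diagonal contractions $W$, and if $G$ is bounded and bounded below on $\ell^2$ this inequality holds automatically. You thus recover the known chain (IM) $\Leftrightarrow$ (IH) $\Leftrightarrow$ ($G$ bounded and bounded below), and the first part of your argument correctly identifies boundedness of $G$ with (C). All the weight of the theorem is therefore shifted onto showing that (S)$+$(C) force $G$ to be bounded below --- and this is exactly where the proposal breaks down.

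The plan of partitioning the index set into finitely many ``uniformly very strongly separated'' pieces and then assembling a lower bound on the full $G$ by cross-term estimates cannot work as stated. First, the ingredients you cite --- a doubling-type property of $d_\h$ extracted from (C), and a decomposition into strongly separated subsequences --- are artifacts of the pseudohyperbolic geometry of the disk and have no counterpart in the abstract CPNP setting; indeed Marshall--Sundberg and Bishop construct Dirichlet-space examples where strong separation (SS) fails to give interpolation, so the Shapiro--Shields route you want to adapt is precisely the one that does not survive the generalization. Second, viewed as operator-theoretic constraints on a positive matrix with unit diagonal, conditions (S) and (C) (off-diagonal entries bounded away from $1$, columns square-summable, $\|G\|\le C$) do \emph{not} imply $G$ is bounded below: take $G = I + \tfrac{1}{2}(S+S^*)$ with $S$ the shift, which meets all these constraints but has $0$ in its spectrum. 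So any proof must use the CPNP structure in an essential way, and controlling cross terms by column summability will not manufacture the missing input.

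What the paper actually does is to never prove the lower bound on $G$ as an intermediate step. Since $G$ is bounded, the Marcus--Spielman--Srivastava solution of the Kadison--Singer/paving problem (which your proposal does not invoke) partitions the sequence into finitely many subsequences for which $\|G_j - I\|\le 1/2$, so each piece is interpolating for $\h$. For each piece, Theorem 9.46(a) of Agler--McCarthy produces a bounded \emph{row} multiplier $\psi_j$ with $\psi_j(z_i^{(j)}) = (v_i^{(j)})^*$; these assemble into one row multiplier $\Psi$ for the whole sequence. Separately, boundedness of $G$ together with separation (S) for the whole sequence gives, by Theorem 9.46(b), a bounded \emph{column} multiplier $\Phi$ with $\Phi(\lambda_i)=e_i$. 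The interpolating operator is then $T(w) = \Psi\,\mathrm{diag}(w_i)\,\Phi$, which directly realizes (IM) with a linear operator $\ell^\infty \to \m(\h)$. The lower bound on $G$ comes out as a \emph{consequence}, not as an ingredient. The paving theorem and the row/column multiplier machinery for CPNP kernels are the two tools missing from your proposal, and without some substitute for them the step you flag as hardest has no viable route in the abstract setting.
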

We will also see that under this hypothesis the interpolating functions can  be found via a bounded linear interpolation operator $T:\ell^\infty\to \m(\h)$, i.e. there
 is a sequence of multipliers $\phi_i\in \m(\h)$ such that for each sequence $(w_i)\in \ell^\infty$ the function $\phi=\sum_i w_i \phi_i$ satisfies $\phi \in \m(\h)$ and  $\phi(\lambda_i)=w_i$ for each $i$.

 In \cite{boe05} B. B\"oe showed that (S+C) $\Rightarrow$ (IM) holds for some spaces of analytic functions, if the Gram matrix $\left( \langle \frac{k_{\lambda_n}}{\|k_{\lambda_n}\|}, \frac{k_{\lambda_m}}{\|k_{\lambda_m}\|} \rangle \right)_{1\le {n,m}<\infty}$ of the
 complete Nevanlinna-Pick kernel $k$ satisfies an extra hypothesis. While B\"oe's theorem applied to spaces of analytic functions in the unit ball of $\C^d$ with reproducing kernel $(1-\langle z, w\rangle)^{-\gamma}$, $0<\gamma<1$, up to now it had been an open question whether the Theorem holds for the Drury-Arveson kernel $(1-\langle z, w\rangle)^{-1}$. In fact, Theorem \ref{CNPintThm} answers a question of Agler-M\raise.4ex\hbox{c}Carthy  \cite[Question 9.57]{ampi} and it affirms a conjecture of Seip  \cite[Conjecture 1, p. 33]{SeipIntSamp} at least for spaces with the complete Pick property.

We will now define the concept of interpolating sequences for multipliers between spaces.

Let $k,\ell$ be two reproducing kernels on a set $X$ such that $k_z, \ell_z \ne 0$ for all $z\in X$. We will denote the corresponding reproducing kernel Hilbert spaces by $\h_k$ and $\h_\ell$. If $\varphi \in \m(\h_k, \h_\ell)$, then it is easily seen that the function $\varphi$ satisfies a growth estimate:
$$ |\varphi(z)|=\frac{|\varphi(z) k_z(z)|}{\|k_z\|^2}=\frac{|\langle \varphi k_z, \ell_z\rangle|}{ \|k_z\|^2}\le \|\varphi\|_M\frac{\|\ell_z\|}{\|k_z\|},
$$ where we have written $\|\varphi\|_M$ for the multiplier norm of $\varphi$, i.e. the norm of the multiplication operator $M_\varphi: \h_k\to \h_\ell$.
 We say that a sequence $(\lambda_i) \subseteq X$ is {\em interpolating for $\m(\h_k, \h_\ell)$}, if whenever $(\alpha_i)$ is  a bounded sequence of complex numbers, then there is a $\varphi \in \m(\h_k, \h_\ell)$ with $\varphi(\lambda_i)=\alpha_i\frac{\|\ell_{\lambda_i}\|}{\|k_{\lambda_i}\|}$.

In Section \ref{sece} we will investigate interpolating sequences for $\m(\h_k, \h_\ell)$.

\begin{theorem} \label{MixedInterpolation} Let $s$ be a normalized  complete Pick kernel on $X$ and $\ell=gs$, where $g$ is positive semi-definite, i.e. for all $\lambda_1,..., \lambda_n \in X$ and all $a_1,..., a_n \in \C$ we have $\sum_{i,j} a_i\overline{a}_j g_{\lambda_i}(\lambda_j) \ge  0$.

(a) A sequence is interpolating for $\m(\h_s, \h_\ell)$ if and only if the sequence satisfies the Carleson condition (C) for $\h_s$ and it is interpolating for $\h_\ell$.

(b) If a sequence is $\h_s$-separated, then it is interpolating for $\m(\h_s, \h_\ell)$ if and only if is interpolating for $\m(\h_s)$.
\end{theorem}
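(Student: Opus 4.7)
The plan is to reduce interpolation for $\m(\h_s,\h_\ell)$ to a matrix Pick inequality. Let $A_{ij}=\ell(\lambda_i,\lambda_j)/(\|\ell_{\lambda_i}\|\,\|\ell_{\lambda_j}\|)$ and $G_{ij}=s(\lambda_i,\lambda_j)/(\|s_{\lambda_i}\|\,\|s_{\lambda_j}\|)$ denote the normalized Gram matrices. Standard facts identify (C) for $\h_s$ with $G\le KI$ as an operator on $\ell^2$, and interpolation for $\h_\ell$ with $cI\le A\le K'I$ on $\ell^2$. The complete Pick property of $s$, combined with the Schur product structure $\ell=gs$ with $g$ positive definite, yields a lifting: a function $\varphi_0$ on $\{\lambda_i\}$ extends to some $\varphi\in\m(\h_s,\h_\ell)$ with $\|M_\varphi\|\le C$ if and only if $C^2\ell(\lambda_i,\lambda_j)-\varphi_0(\lambda_i)\overline{\varphi_0(\lambda_j)}s(\lambda_i,\lambda_j)$ is positive semidefinite on the sequence. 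After normalization with $\varphi(\lambda_i)=\alpha_i\|\ell_{\lambda_i}\|/\|s_{\lambda_i}\|$, the Pick condition becomes the operator inequality $C^2 A\ge D_\alpha G D_\alpha^*$, required uniformly over $\alpha$ with $\|\alpha\|_\infty\le 1$.

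For the easy direction $(\Leftarrow)$ of (a), given (C) for $\h_s$ and (IH) for $\h_\ell$, one has $C^2 A\ge C^2 c I\ge KI\ge D_\alpha G D_\alpha^*$ with $C=\sqrt{K/c}$ whenever $\|\alpha\|_\infty\le 1$, so the Pick inequality holds and the lifting yields the desired interpolating multiplier.

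The direction $(\Rightarrow)$ of (a) proceeds in two stages. A Pelczy\'nski-type argument first provides bounded linear interpolation: multipliers $F_n\in\m(\h_s,\h_\ell)$ with $F_n(\lambda_j)=\delta_{nj}\|\ell_{\lambda_n}\|/\|s_{\lambda_n}\|$ and $\|\sum\beta_n F_n\|_M\le C\|\beta\|_\infty$ for $\beta\in\ell^\infty$. To deduce (C) for $\h_s$, apply random signs $\epsilon_n=\pm 1$; for $f\in\h_s$ the Rademacher sum $\sum\epsilon_n F_n f\in\h_\ell$ has $\h_\ell$-norm at most $C\|f\|_{\h_s}$, and taking expectation uses orthogonality of Rademachers in the Hilbert space $\h_\ell$ to yield $\sum\|F_n f\|_{\h_\ell}^2\le C^2\|f\|_{\h_s}^2$; combined with the reproducing-kernel estimate $\|F_n f\|_{\h_\ell}^2\ge|f(\lambda_n)|^2/\|s_{\lambda_n}\|^2$, this gives (C). The main obstacle is then extracting (IH) for $\h_\ell$: the upper bound $A\le K'I$ follows from the Schur factorization $A=\tilde G\odot G$ (with $\tilde G$ the normalized Gram matrix of $g$, of unit diagonal) and the Schur-product inequality $\|A\|\le\|G\|$, but the lower bound $A\ge cI$---equivalently, surjectivity of $T_\ell\colon\h_\ell\to\ell^2$---is the crux and requires exploiting the Pick inequality in its full uniformity over $\alpha$ with $\|\alpha\|_\infty\le 1$ to extract coercivity of $A$.

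Part (b) combines (a) with Theorem \ref{CNPintThm}. The $(\Rightarrow)$ is immediate: (IM) for $\m(\h_s,\h_\ell)$ yields (C) for $\h_s$ by (a); together with $\h_s$-separation this gives (S+C), whence (IM) for $\m(\h_s)$ by Theorem \ref{CNPintThm}. For $(\Leftarrow)$: (IM) for $\m(\h_s)$ with separation yields (IH) for $\h_s$ by Theorem \ref{CNPintThm}, so $c_1 I\le G\le KI$ on $\ell^2$. Via the Schur factorization $A=\tilde G\odot G$ the bound $\|A\|\le\|G\|$ is immediate; for the lower bound, $\tilde G\odot(G-c_1 I)\ge 0$ by Schur's theorem while $\tilde G\odot(c_1 I)=c_1 I$ (unit diagonal), giving $A=\tilde G\odot(G-c_1 I)+c_1 I\ge c_1 I$. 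Hence (IH) for $\h_\ell$ holds, and (a) yields (IM) for $\m(\h_s,\h_\ell)$.
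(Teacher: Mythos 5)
The overall architecture (reformulate interpolation for $\m(\h_s,\h_\ell)$ as a uniform matrix Pick inequality, then translate into bounds on the Grammians $G$ and $A$ and use Schur products) is close to the paper's. Your $(\Leftarrow)$ direction of (a) and your treatment of (b) are correct and essentially match the paper's use of its Propositions \ref{prope1}, \ref{prope2} and Lemma \ref{GrammianLemma2}.

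However, the $(\Rightarrow)$ direction of (a) has two real gaps. First, you assert "a Pelczy\'nski-type argument" producing bounded \emph{linear} interpolation, i.e.\ multipliers $F_n$ with $\|\sum \beta_n F_n\|_M \le C\|\beta\|_\infty$. This is a much stronger statement than the hypothesis that the sequence is interpolating, and it does not follow from any soft Banach-space argument: for the diagonal case $\m(\h)$, linear interpolation is precisely the content the paper obtains from the Marcus--Spielman--Srivastava theorem (and for $H^\infty$ it is Beurling's theorem). Your Rademacher computation depends on being able to expand $\sum_n\epsilon_n F_n f$ and use orthogonality in $\h_\ell$, so without the $F_n$ in hand the derivation of (C) collapses. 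The paper's Proposition \ref{prope1}(i) avoids this altogether: the open mapping theorem gives only that, for each $\alpha\in\ell^\infty$ with $\|\alpha\|_\infty\le 1$, there is \emph{some} $\varphi$ with uniformly bounded multiplier norm and $\varphi(\lambda_i)=\alpha_i\nu_i$; one then applies $M_\varphi^*$ (which sends $\hat\ell_{\lambda_i}\mapsto\overline{\alpha_i}\,\hat s_{\lambda_i}$) to finite combinations of normalized kernels and integrates over phases $\alpha_j=e^{it_j}$ to obtain both $G(s)\le MI$ and $G(\ell)\ge M^{-1}I$, with no linearity needed.

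Second, and more seriously, you single out the lower bound $A\ge cI$ (equivalently, $G(\ell)$ bounded below) as "the crux," say it "requires exploiting the Pick inequality in its full uniformity over $\alpha$ to extract coercivity of $A$," and then do not carry out that extraction. That sentence is a description of what needs to be done, not an argument; since this is the substantive half of $(\Rightarrow)$, the proof is incomplete. Once you work with $M_\varphi^*$ and the averaging over unimodular $\alpha$ as in the paper's Proposition \ref{prope1}(i), this bound drops out immediately, which also shows that the step is not as delicate as you suggest.
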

Note that by taking $\ell=s$ we recover the equivalence of (IM) and (IH) for irreducible complete Pick kernels. This theorem applies for example, whenever $\h_s=H^2$ and the operator $M_z$ of multiplication with the coordinate function  defines a contraction operator on $\h_\ell$.

In certain situations we can improve the previous theorem. We start with two simple observations. First by Remark 8.10 of \cite{ampi} and the Schur product theorem the expression $s^t_w(z)$ defines a reproducing kernel, whenever $s$ is a normalized  complete Pick kernel and $t>0$. Second, since for fixed $t>0$ we have $\sqrt{1-x^t}\sim \sqrt{1-x}, 0\le x\le 1$, it follows that a sequence is $\h_s$ separated, if and only if it is $\h_{s^t}$ separated.

\begin{theorem} \label{thmsst} Let $s_1,s_2$ be  normalized  complete Pick kernels on $X$ such that $s_2/s_1$ is positive semi-definite, and  let $t\ge 1$.

Then a sequence is interpolating for $\m(\h_{s_1},\h_{s_2^t})$ if and only if it satisfies the $\h_{s_1}$ Carleson condition (C) and is $\h_{s_2}$-separated (S).
\end{theorem}

In particular, it follows from Theorem \ref{CNPintThm} that for a normalized complete Pick kernel $s$ and any $t\ge 1$, the $\m(\h_s,\h_{s^t})$ interpolating sequences are just the $\m(\h_s)$ interpolating sequences.

Theorem \ref{thmsst} applies if $s_1$ is any normalized  irreducible complete Nevanlinna Pick kernel for a space of analytic functions in the open unit disk $\D$ and if ${s_2}_w(z)=\frac{1}{1-\overline{w}z}$ is the Szeg\"{o} kernel, the reproducing kernel for $H^2$. In this case $s_1(z)=\frac{1}{1-\sum_{n\ge 1} \overline{b_n(w)}b_n(z)}$ for some analytic functions $b_n$ with $\sum_{n \ge 1}|b_n(z)|<1$ for all $|z|<1$. This implies that the row operator $(f_1, f_2,...)\to \sum_{n \ge 1} b_nf_n$ is bounded from $H^2\oplus H^2 \oplus ...$ into $H^2$, and this is easily seen to be equivalent to $s_2/s_1$ being positive semi-definite.

Then for any $t>1$ the space $\h_{s_2^t}$ is a weighted Bergman space of analytic functions on the open unit disc such that $\int_{\mathbb{D}}|f|^2(1-|z|^2)^{t-1} dA<\infty$.  In Example \ref{ExNotSeparated} we will show that if $\h_\ell$ is a weighted Bergman space with weight $\exp(-\frac1{1-|z|^2})$, then there are $\m(H^2, \h_\ell)$ interpolating sequences that are not $H^\i$-interpolating.

Section \ref{secf} contains applications of our results. In particular, using Theorem \ref{CNPintThm},
we generalize a result of Lech \cite{LechEssNormal}; he showed that
 there is  a multiplication operator
on the Dirichlet space which is not essentially normal, and in Proposition \ref{prxe1} we show that this result is true under a mild additional hypothesis
in any complete Pick space where the kernel function is unbounded. Moreover, we indicate how the results of Section \ref{sece}
can be used to exhibit noncompact multiplication operators between pairs of spaces.

\section{Background}

\subsection{Complete Pick kernels}
\label{secb}

Let $\h_k$ be a reproducing kernel Hilbert space on a set $X$, with kernel $k$. When referring to reproducing kernels we will use the notations $k_w(z)=k(z,w)$ interchangeably.  For basic facts about reproducing kernel Hilbert spaces we refer the reader to \cite{ampi} or \cite{para16}. Let $n$ be a positive integer, and let $\mn$ denote the $n$-by-$n$ complex matrices.
We say that $k$ has the {\em $\mn$ Pick property} if, for every finite sequence
$\l_1, \dots , \l_N$ of distinct points in $X$, and every sequence $W_1, \dots, W_N$ in $\mn$,
the 
operator
\beq
L : \vee\{ k_{w_j}  \otimes \C^n \ | \ 1 \leq j \leq N \}   &\to & \vee\{ k_{w_j}  \otimes \C^n \ | \ 1 \leq j \leq N \} \\
k_{w_j} \otimes v & \mapsto & k_{w_j} \otimes W_j^* v
\eeq
has a norm-preserving  extension to the adjoint of a multiplication operator
\[
M_{\Phi}^* : \h_k \otimes \C^n \to \h_k \otimes \C^n .
\]
This is equivalent to saying that whenever
the block matrix
\begin{equation*}
  \Big[ k(\lambda_j,\lambda_j) (I_{\mathbb C^n} - W_i W_j^*) \Big]_{i,j=1}^N
\end{equation*}
is positive, there is a multiplier $\Phi$ of $\h_k \otimes \C^n$ of norm at most $1$ that satisfies
\[
\Phi(\l_i) \= W_i, \quad \forall \ 1 \leq i \leq N .
\]
When $n=1$, we say $k$ has the {\em  Pick property}.
If $k$ has the $\mn$ Pick property for every $n$, we say the kernel, and the corresponding Hilbert space
$\h_k$,  have the {\em complete Pick property}
or the {\em complete Nevanlinna-Pick property}.

Kernels with the  complete Pick property are described by the McCullough-Quiggin theorem
\cite{mccul92, qui93}; an alternative description is in \cite{agmc_loc}.
Let us say that the kernel $k$ is
{\em irreducible}  if $X$ cannot be partitioned into two disjoint non-empty sets $X_1$ and $X_2$ such that
$k(x_1,x_2) = 0$ whenever $x_1 \in X_1, x_2 \in X_2$.
The McCullough-Quiggin theorem says that an irreducible kernel has the complete Pick property if and only if
$1/k$ has exactly one positive square, \ie for every finite set of distinct points $\{ \l_1, \dots, \l_N \} \subset X$,
the self-adjoint  matrix
\[
\left[ \frac{1}{k(\l_j, \l_i)} \right]_{i,j =1}^N
\]
has exactly one positive eigenvalue, counted with multiplicity.

Examples of such kernels and spaces are the \sz kernel $\dis \frac{1}{1- \bar w z}$  for the Hardy space on the unit disk; the Dirichlet kernel $\dis \frac{-1}{\bar w z} \log(1 - \bar w z)$  on the disk \cite{shashi62, marsun};
the kernels $\dis \frac{1}{(1- \bar w z)^t}$ for $0 < t < 1$ on the disk  \cite{shashi62, marsun};
the Sobolev space $W^2_1$ on the unit interval \cite{ag90b};  various weighted Sobolev spaces \cite{qui93}; superharmonically weighted Dirichlet spaces in the unit disc \cite{Shimorin};
 the kernel $\dis \frac{1}{2 - \zeta( z + \bar w)}$
on a half-plane
\cite{mcc04}; and the Drury-Arveson space, the space of analytic functions on the unit ball $\B_d$ of a $d$-dimensional
Hilbert space (where $d \in \{ 1,2, \dots, \i \})$ with kernel
\[
k(z,w) \= \frac{1}{1 - \la z, w \ra}
\]
\cite{lu76, dru78, arp95, dapi98, agmc_loc, arv98}.

If $\l_0$ is a point in $X$, and $k(z, \l_0) \neq 0$ for all $z \in X$, one can define a new kernel
\[
\ell(z,w) \= \frac{k(z,w) k(\lambda_0,\lambda_0)}{k(z,\l_0) k(\l_0, w) }.
\]
The new kernel will have the property that
$\ell(z, \l_0) = 1$ for all $z \in X$ (we call this being normalized at $\l_0$) and
$\h_\ell$ is just $k(\lambda_0,\lambda_0)^{1/2} k(\cdot, \l_0)^{-1} \h_k$.
For an irreducible Pick kernel, $k(z, w)$ is never 0 \cite[Lemma 7.2]{ampi}, so it is convenient to assume
that our kernels are always normalized.

The Drury-Arveson space is universal among all spaces that have the complete Pick property, in the following sense
\cite{agmc_cnp}.
\bt
\label{thmb1}
Let $\cp$ be an irreducible kernel on $X$ with the complete Pick property, normalized at $\l_0$.
Then there is an auxiliary Hilbert space ${\mathcal K}$
and  there is a map $b : X \to {\mathcal K}$ with $b(\l_0)=0$ and $\|b(z)\|_{\mathcal K}<1$ for all $z\in X$
such that
\[
\cp(z,w) \= \frac{1}{1 - \la b(z), b(w) \ra_{\mathcal K}} .
\]
If $\h$ is separable, then ${\mathcal K}$ can be chosen to be separable.
\et

\subsection{Grammians}
\label{secc}

Let $\h_k$ be a reproducing kernel Hilbert space on a set $X$, with kernel $k$. We will write $\hat{k}_z=k_z/\|k_z\|$ for the normalized kernel function at $z$.
Let $(\l_i)$ be a sequence of distinct points in $X$. The Grammian, or Gram matrix, associated with the
sequence is the (infinite) matrix $G(k) = [G_{i,j}]$, where
\[
  G_{ij} \ :=  \ \langle \hat k_{\lambda_i}, \hat k_{\lambda_j} \rangle  \= \frac{k(\l_j,\l_i)}{\sqrt{k(\l_j, \l_j) k(\l_i, \l_i)}} .
\]
If the kernel $k$ is understood, simply write $G$ instead of $G(k)$.

We shall say that the sequence  $(\l_i)\subseteq X$ has a bounded Grammian (BG) if the Gram matrix, thought of as an operator
on $\ell^2$, is bounded; we shall say that it is bounded below (BB) if the Gram matrix is bounded below on
$\ell^2$. It is known that  if the Grammian  of a sequence for the Szeg\"{o} kernel $s_w(z)=\frac{1}{1-\overline{\lambda}z}$ is bounded below, then it is bounded above. The analogous fact is not true for general complete Nevanlinna-Pick kernels, see \cite[Example 9.55]{ampi}.

In the following lemma we have listed some elementary facts about Grammians.

\begin{lemma} \label{elementaryGrammian} (a) The Grammian is bounded (BG) if and only if the sequence satisfies the Carleson measure condition for $\h_k$ (condition (C) from the Introduction).

(b) The  following three conditions are equivalent:
\begin{enumerate}
\item the Grammian
is bounded and bounded below (BG+BB),
\item the functions $\hat{k}_{\lambda_i}$ form a Riesz sequence, i.e. there are $c_1,c_2>0$ such that for all scalars $a_i$
$$c_1 \sum_i|a_i|^2 \le \|\sum_i a_i \hat{k}_{\lambda_i}\|^2 \le c_2 \sum_i|a_i|^2,$$
\item the sequence is interpolating for $\h_k$ (condition (IH) from the Introduction).
\end{enumerate}
\end{lemma}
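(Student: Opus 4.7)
The plan is to reformulate both parts via a single pair of adjoint operators. Define the sampling operator $T : \h_k \to \ell^2$ formally by $Tf = (f(\lambda_i)/\|k_{\lambda_i}\|)_i = (\langle f,\hat{k}_{\lambda_i}\rangle)_i$, and let $V := T^*: \ell^2 \to \h_k$, densely defined on finitely supported sequences by $V e_i = \hat{k}_{\lambda_i}$. The key identity, valid for every finitely supported $\vec a = (a_i) \in \ell^2$, is
$$
\|V\vec a\|_{\h_k}^2 \;=\; \sum_{i,j} \langle \hat{k}_{\lambda_i},\hat{k}_{\lambda_j}\rangle\, a_i \overline{a_j},
$$
which is precisely the Hermitian quadratic form on $\ell^2$ associated with the (Hermitian) Gram matrix $G$. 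Since a Hermitian form is bounded if and only if the corresponding operator is bounded (with matching norms), and satisfies two-sided bounds $c_1 \|\vec a\|^2 \le \cdots \le c_2 \|\vec a\|^2$ if and only if the operator satisfies $c_1 I \le G \le c_2 I$, the boundedness behavior of $V$ and of $G$ coincide exactly.

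For part (a), condition (C) is by definition the statement that $T$ is bounded from $\h_k$ into $\ell^2$ with $\|T\|^2 \le c$. By Hilbert space duality, $T$ is bounded if and only if $V = T^*$ is bounded, which by the displayed identity is equivalent to $G$ defining a bounded operator on $\ell^2$.

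For part (b), the equivalence (1)$\Leftrightarrow$(2) follows immediately from the same identity: the Riesz bounds in (2) translate into the operator inequalities $c_1 I \le G \le c_2 I$ of (1). For (2)$\Leftrightarrow$(3), note that (IH) says $T: \h_k \to \ell^2$ is bounded and surjective; by the closed range theorem (combined with $\operatorname{ran}(T)^\perp = \ker(T^*)$), a bounded Hilbert-space operator is surjective if and only if its adjoint is bounded below. Hence (IH) is equivalent to $V = T^*$ being bounded and bounded below, which by the identity is exactly (2). There is no essential obstacle here; once the sampling operator $T$ and its adjoint $V$ are in place, the lemma reduces to bookkeeping about Hermitian forms and standard Hilbert-space duality.
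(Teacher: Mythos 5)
Your proposal is correct and takes essentially the same approach as the paper, which for this lemma simply cites Propositions 9.5 and 9.13 of \cite{ampi} and offers the hint of ``considering the adjoint of the interpolation map $T$'' --- precisely the sampling-operator-and-adjoint duality you carry out in detail. The only care one must take, which you handle correctly, is that $V$ must be treated as densely defined on finitely supported sequences (rather than literally invoking $T^*$ before $T$ is known to be bounded), so that the quadratic-form identity and the standard closed-range/duality arguments go through cleanly.
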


For the proof of (a) see \cite[Proposition 9.5]{ampi}. The equivalence of (1) and (2) in (b) is \cite[Proposition 9.13]{ampi}, while the equivalence of (2) and (3) follows by considering the adjoint of the interpolation map $Tf= (\frac{f(\lambda_i)}{\|k_{\lambda_i}\|})$.

To prove Theorem~\ref{CNPintThm} we shall use the following result from \cite[Theorem 9.46]{ampi};
we use $\{e_i \}$ to denote the standard
orthonormal basis for $\ell^2$.

\bt
\label{thmd2}
Let $k$ be an irreducible complete Pick kernel on $X$, let
$(\l_i )$ be a sequence in $X$, and let $G$ denote the Grammian associated with $(\lambda_i)$.
Then

{\rm
(a)} $G$ is bounded below if and only if there is a function $\Psi$
in $\m(\hk \otimes \ell^2, \hk )$ such that
\be
\label{eqie6}
\Psi(\l_i) \= e_i^\ast \= (0, \dots, 0 ,
1 ,
0 , \dots ) .
\ee

{\rm
(b)} $G$ is bounded and the sequence is weakly separated
if and only if there is a function $\Psi$
in $\m(\hk, \hk \otimes \ell^2 )$ such that
\be
\label{eqie8}
\Psi(\l_i) \= e_i \=
\left( \begin{array}{c}
0\\
\vdots\\
0\\
1\\
0\\
\vdots
\end{array} \right) .
\ee
\et

\begin{remark}
  In \cite{ampi}, a more restrictive definition of irreducibility is used.
  In addition to our irreduciblity assumption, it is assumed
  that $k(\cdot,w)$ and $k(\cdot,z)$ are linearly independent if $z \neq w$.
  However, the proof of \cite[Theorem 9.46]{ampi} shows that this theorem remains
  valid under our less restrictive definition of irreducibility.
\end{remark}

\section{The proof of Theorem \ref{CNPintThm} and some remarks}
\label{secd}

We are now in  position to prove Theorem \ref{CNPintThm}.

\bp Let $s_w(z)$ be a complete Pick kernel on $X$, and assume that the sequence $(\lambda_i)$ is $\h_s$ separated and satisfies the Carleson condition (C). We must prove that the sequence is interpolating.

By Lemma \ref{elementaryGrammian}(a) the Gram matrix is bounded.

 By the  Marcus-Spielman-Srivastava theorem \cite{mss15},
 since the Gram matrix $G$ is bounded, after a permutation the matrix $G-I$ can be decomposed into
 a finite block matrix such that each diagonal block
  has norm at most $1/2$ (this follows from the paving conjecture, which J. Anderson
  proved \cite{an79} is equivalent to the Kadison-Singer conjecture,
which was proved by the Marcus-Spielman-Srivastava theorem).
By Lemma \ref{elementaryGrammian} (b), this means that the sequence
 $(\l_i)$ is a finite union of  sequences $(z_{i}^{(1)}), \ldots, (z_i^{(n)})$, for each of which
 the Gram matrix is bounded above and below, so each $(z_i^{(j)})$ is an interpolating sequence.

By  Theorem \ref{thmd2} (a),  for $j =1, \dots, n$ we can find a Hilbert space $\L_j$
with an orthonormal basis $\{ v_i^{(j)} \}_{i=1}^\i$ and a bounded multiplier
$\psi_j \in \m(\h \otimes \L_j, \h)$ such that $\psi_j (z_i^{(j)}) = ( v_i^{(j)})^*$.
 Let $\Psi = (\psi_1,\ldots,\psi_n)$, which
  is a bounded multiplier in $\m(\h \otimes ( \oplus_{j=1}^n \L_j ) , \h)$.
  Observe that for each $i \in \mathbb N$, there exists a unique element
  $(\mu(i),\nu(i)) \in \{1,\ldots,n\} \times \mathbb N$ such that $\lambda_i = z_{\nu(i)}^{\mu(i)}$.
  We identify $ \oplus_{j=1}^n \L_j $ with $\ell^2$ in such a way
  that $e_i$ is identified with $v^{\mu(i)}_{\nu(i)}$.
Then $\Psi$ is a bounded multiplier in $\m(\h \otimes \ell^2, \h)$ that satisfies
\be
\label{eqd1}
\Psi(\l_i) \= ( * \cdots * 1 * \cdots ) ,
\ee
where the $1$ is in the $i^{\rm th}$ slot.

 By Theorem \ref{thmd2} (b), applied to the whole sequence $(\l_i)$,
  there exists a bounded multiplier $\Phi \in \m(\h, \h \otimes \ell^2 )$
  such that
\be
\label{eqd2}
\Phi(\l_i) \ =\  e_i \=  \begin{pmatrix}
      0 \\ \vdots \\ 0 \\ 1\\ 0 \\ \vdots
    \end{pmatrix}.
\ee

 Let $\Delta$ be the embedding of $\ell^\infty$ into
  $\cB(\ell^2)$ via diagonal operators and define
 \beq
    T: \ell^\infty &\ \to \ & \m(\h), \\
a &\mapsto & \Psi \Delta(w) \Phi.
\eeq
  It is clear that the range of $T$ consists of multiplication operators
  and that $T$ is linear and bounded. Moreover, if $w = (w_i) $ is any sequence in $ \ell^\infty$,
  then
\begin{align*}
    T(w) (\l_i)
    &=
    \begin{pmatrix}
      * & \cdots & * & 1 & * &\cdots
    \end{pmatrix}
    \begin{pmatrix}
    w_1 & 0 & \ldots  \\
      0 & w_2 & \ldots  \\
      \vdots & \ddots & \ddots  \\
    \end{pmatrix}
    \begin{pmatrix}
      0 \\ \vdots \\ 0 \\ 1\\ 0 \\ \vdots
    \end{pmatrix} \\
    &=
    w_i.
  \end{align*}
Thus $T(w)$ is a multiplier that solves the interpolation problem $\l_i \mapsto w_i$,
so $(\l_i)$ is an interpolating sequence as required.
\ep

\begin{remark}
One application of Theorem~\ref{CNPintThm} is to give another proof of the Marshall-Sundberg and Bishop theorem
\cite{marsun, bis} characterizing the interpolating sequences for the Dirichlet space, and the spaces with kernels
$1/(1-\bar w z)^t$ for $0 < t < 1$, on the unit disk. Two other proofs of this have been given by B. B\"oe \cite{boe02,
boe05}.
\end{remark}

\begin{remark}
The proof actually constructs a linear interpolation operator $T : \ell^\i \to \m(\h)$.
In the case of $H^\i(\D)$, this is P. Beurling's theorem --- see \cite[Theorem VII.2.1]{gar81};
it was proved for the Dirichlet space by B\"oe \cite{boe02}.
\end{remark}

\begin{remark} Interpolating sequences for spaces with irreducible complete Nevanlinna Pick kernels are ``complete interpolating sequences". Indeed, if $W_i\in {\mathcal B}({\mathcal K})$ with $\|W_i\|\le C$ for all $i$, then one checks as in the proof above that $\varphi=\sum_i W_i \psi_i \phi_i \in \m(\h_s\otimes {\mathcal K})$ and it satisfies $\varphi(\lambda_i)=W_i$ for all $i$. Here $\psi=(\psi_1,\psi_2,...), \phi=(\phi_1, \phi_2,...)^t$ are the bounded row and column multiplication operators which satisfy $\psi_i(\lambda_j)=\phi_i(\lambda_j)=\delta_{ij}$ and whose existence is guaranteed by Lemma \ref{elementaryGrammian} and Theorem \ref{thmd2}.
\end{remark}

\begin{remark}
In the papers \cite{os08, sei09, ol11} interpolating sequences are characterized for various spaces of Dirichlet series
(for which the set $X$ would be a half-plane),
but with the proviso that the sequences are bounded.
In \cite{mcsh15}, spaces of Dirichlet series with the complete Pick property, including universal ones, are constructed. Theorem~\ref{CNPintThm}
describes all their interpolating sequences.
\end{remark}

\begin{remark}
The first part of the proof of Theorem \ref{CNPintThm} shows the
 existence of $\Psi$
satisfying \eqref{eqd1}, given $\Phi$ satisfying \eqref{eqd2}.
If we could show that $\Phi^t$, the transpose of $\Phi$, were bounded, we would not need to
invoke the Marcus-Spielman-Srivastava theorem.
T. Trent proved this for the Dirichlet space on the disk \cite{tre04}.
The following theorem, whose proof the authors intend to give in an upcoming paper,
shows that it is also true on the Drury-Arveson space of finitely many variables.
\end{remark}

\begin{theorem} \label{thmd3} Let  $\h$ be the Drury-Arveson space on the ball $\B_d$, for $d$ finite.
 Then there is $C>0$ such that whenever $\{\varphi_i\}$ is a sequence of functions  in ${\rm Hol}(\B_d)$
with $$\sum_{i=1}^\infty \|\varphi_i h\|^2 \le \|h\|^2 \text{ for all }h \in \h_,$$ then
$$\|\sum_{j=1}^\infty \varphi_j h_j\|^2 \le C \sum_{j=1}^\infty \|h_j\|^2 \text{ for all } \{h_j\}\in \h\otimes \ell^2.$$
\end{theorem}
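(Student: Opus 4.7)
The plan is to exploit the transfer-function realization theorem for contractive multipliers into Drury--Arveson space together with the row contractivity $\sum_{i=1}^d M_{z_i} M_{z_i}^* \le I$ of the coordinate multipliers, which holds only when $d$ is finite. The hypothesis says that $\Phi = (\varphi_i)^{t}$ is a contractive column multiplier from $\h$ to $\h \otimes \ell^2$, and the conclusion asks that its transpose $\Phi^t$ be bounded as a row multiplier from $\h \otimes \ell^2$ to $\h$. Finite-dimensionality of the ball is essential here: on the infinite-variable Drury--Arveson space the coordinate sequence $(z_1, z_2, \dots)$ itself is a column contraction that fails to be a bounded row multiplier.

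First I would invoke the Ball--Trent--Vinnikov/Agler--McCarthy realization theorem in the vector-valued setting to obtain a representation
\[
\Phi(z) \;=\; D + C\,\bigl(I_{\cH} - Z(z)^* A\bigr)^{-1} Z(z)^* B,
\]
where $Z(z)^* = (\overline{z_1} I_{\cH}, \dots, \overline{z_d} I_{\cH})^t$ appears as a column, and the colligation $U = \bigl(\begin{smallmatrix} A & B \\ C & D \end{smallmatrix}\bigr)\colon (\cH \otimes \C^d) \oplus \C \to \cH \oplus \ell^2$ is a contraction. Next I would transpose the colligation to obtain a realization of the row $\Phi^t$ of the schematic form
\[
\Phi^t(z) \;=\; D^t + B^t\bigl(I - Z(z)\, A^t\bigr)^{-1} Z(z)\, C^t,
\]
in which $Z(z) = (z_1 I, \dots, z_d I)$ now enters as a row. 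On the Drury--Arveson space with $d$ finite, the operator row $(M_{z_1}, \dots, M_{z_d})$ is a genuine row contraction, so that the Neumann series converges and the resulting operator has a norm controlled by a constant depending only on $d$ and on the colligation data.

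The final step is the quantitative estimate: converting a row-contractive realization back into an $\ell^2$-valued row multiplier on Drury--Arveson introduces at worst a factor depending on $d$, which one sees via the compression from the full (free) Fock picture to the symmetric (Drury--Arveson) picture in the framework of Davidson--Pitts and Popescu. This should yield the desired bound $\|\Phi^t\|^2 \le C$ with $C = C(d)$, and since the constant is independent of the sequence $(\varphi_i)$, the estimate in the statement of Theorem~\ref{thmd3} follows.

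I expect the main obstacle to be making the transposition step rigorous and quantitative. A merely contractive colligation need not have a contractive transpose, so one likely has to first dilate $U$ to a unitary, transpose cleanly there, and then compress back, while keeping careful track of how the constants accumulate. The hypothesis $d < \infty$ must enter precisely when the transposed realization is evaluated against the row $(M_{z_1}, \dots, M_{z_d})$, and any correct proof must verify that the final constant $C$ depends only on $d$ and not on the particular column multiplier $(\varphi_i)$.
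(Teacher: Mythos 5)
You should be aware that the paper itself does not prove Theorem~\ref{thmd3}: the remark immediately preceding it explicitly states that the proof is deferred to an upcoming paper, so there is no in-paper argument to compare your attempt against. (The statement was later subsumed by a much stronger result of Hartz, who showed that \emph{every} normalized complete Pick space satisfies the column-row property with constant $1$, by an argument that does not resemble a realization-transposition scheme.)

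On its own merits your sketch identifies a reasonable point of entry --- transfer-function realization plus row contractivity of $(M_{z_1},\dots,M_{z_d})$ --- but it has a genuine gap at exactly the step you flag yourself as ``the main obstacle,'' and that step is the entire content of the theorem. After writing $\Phi(z) = D + C\bigl(I - Z(z)A\bigr)^{-1}Z(z)B$ with $Z(z) = (z_1 I,\dots,z_d I)$ a row and with $\bigl(\begin{smallmatrix}A & B\\ C & D\end{smallmatrix}\bigr)\colon \mathcal H\oplus\C \to \mathcal H^d\oplus\ell^2$ unitary after dilation, the transpose takes the form $\Phi^t(z) = D^t + B^t\bigl(I - Z(z)^t A^t\bigr)^{-1}Z(z)^t C^t$, in which $Z(z)^t$ appears as a \emph{column}. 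This is not a realization of the shape the Ball--Trent--Vinnikov / Popescu machinery handles. The standard proof that a unitary colligation yields a contractive multiplier rests on the identity $Z(z)Z(w)^* = \langle z,w\rangle I_{\mathcal H}$; the transposed object $Z(z)^tZ(w)^{t*}$ is the operator $\bigl(z_i\overline{w_j}I_{\mathcal H}\bigr)_{i,j}$ on $\mathcal H^d$, which is not a scalar multiple of the identity, so the positive-kernel identity that makes the realization contractive breaks down. Likewise, in the free Fock picture the column $(L_1,\dots,L_d)^t$ has operator norm $\sqrt{d}$, so $\|A^t L^t\|$ can be as large as $\sqrt{d}$ and the resolvent you would need may fail to exist. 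Your closing assertion that converting the transposed realization back into a bounded row multiplier ``introduces at worst a factor depending on $d$'' is precisely what must be proved; invoking it makes the argument circular. To close the gap you would need a concrete mechanism (a specific dilation and compression, a kernel decomposition, or a combinatorial estimate on the symmetric Fock space) that actually produces the $d$-dependent bound rather than postulating it.
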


\begin{remark}
It is easy to show \cite[Proposition 9.11]{ampi} that if the Gram matrix is bounded, then the sequence is a finite union
of sequences that have (BG) and (S). So whenever the atomic measure $\sum \| \cp_{\l_i} \|^2 \delta_{\l_i}$ is a Carleson measure (\ie (C) holds), the sequence $(\l_i)$ is a finite union of interpolating sequences.
Carleson measures for the Drury-Arveson space were characterized in \cite{ars08}, and in a different way in
\cite{vw12}.
\end{remark}

\section{Pairs of kernels}
\label{sece}

If  $k,\ell$ are two reproducing kernels on a set $X$, then one can formulate a Pick problem for $\m(\h_k,\h_\ell)$: For which $\lambda_1,...,\lambda_n\in X$ and $w_1,..., w_n\in \C$ is there a multiplier $\varphi \in \m(\h_k,\h_\ell)$ with $\|\varphi\|_M\le 1$ and such that $\varphi(\lambda_i)=w_i$ for $i=1,...,n$?

 An obvious necessary condition is obtained by looking at $M_\varphi^*: \h_\ell \to \h_k$.
We have  $M^*_\varphi \ell_\lambda = \overline{\varphi(\lambda)} k_\lambda$ for all $\lambda\in X$, and one checks that $\|M^*_\varphi\|\le 1$ if and only if $\ell_\lambda(z) -\varphi(z)\overline{\varphi(\lambda)}k_\lambda(z)$ is positive semi-definite. Thus, if $\|M_\varphi\|\le 1$ and $\varphi(\lambda_i)=w_i$ for $i=1,...,n$, then by restricting $M^*_\varphi$ to the linear span of $\ell_{\lambda_1},..., \ell_{\lambda_n}$ one sees that the matrix $$\left(\ell_{\lambda_i}(\lambda_j) -w_j\overline{w}_i k_{\lambda_i}(\lambda_j)\right)_{1\le i,j\le n}$$ is positive semi-definite. We say that the pair $(k,\ell)$ has the Pick property if this condition is always sufficient for the solution of the Pick problem in this context.

A straightforward compactness argument shows that if the pair $(k,\ell)$ has the Pick property,
then one can solve Pick problems with infinitely many points.
More precisely if $(k,\ell)$ has the Pick property, then
whenever an operator $T$ is defined on a subspace of $\h_\ell$ spanned by the kernel functions
from some set $\F \subseteq X$
in the following way
\beq
T: \vee  \{ \ell_\l  : \ \l \in \F \} & \ \to \ & \ \h_k \\
 \hl_\l  &\ \mapsto & \overline{\alpha(\l)} {\hkl},
\eeq
and $T$ is bounded, then $\alpha$ extends to a (necessarily bounded) function on all of $X$ so that
the extension of $T$ given by
$$
\hl_\l \ \mapsto \overline{\alpha(\l)} {\hkl} \quad \forall\  \l \in X
$$
 has the same norm as $T$. Note that for the extension we have $T=M_\varphi^*$ for $\varphi \in \m(\h_k,\h_\ell)$ with $\varphi(\lambda)= \alpha(\lambda)\frac{\|\ell_\lambda\|}{\|k_\lambda\|}$.

\begin{proposition}
\label{prope1}
(i) If $(\l_i)$ is an interpolating sequence for $\m(\h_k,\h_\ell)$, then the Gram matrix $G(k)$ is bounded, and $G(\ell)$
is bounded below.

(ii) If $(k,  \ell)$ has the
 Pick property, the converse holds.
\end{proposition}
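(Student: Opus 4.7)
The plan is to translate the interpolation problem into norm bounds on the adjoint operators $M_\varphi^*\colon \h_\ell\to\h_k$ and to derive both Grammian bounds from a single master inequality via two carefully chosen randomizations. A direct calculation gives $M_\varphi^*\hat\ell_{\lambda_i} = \overline{\alpha_i}\hat k_{\lambda_i}$ whenever $\varphi\in\m(\h_k,\h_\ell)$ satisfies $\varphi(\lambda_i) = \alpha_i\|\ell_{\lambda_i}\|/\|k_{\lambda_i}\|$.

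For (i): the evaluation map $\varphi\mapsto\bigl(\varphi(\lambda_i)\|k_{\lambda_i}\|/\|\ell_{\lambda_i}\|\bigr)_i$ is a bounded surjection from $\m(\h_k,\h_\ell)$ onto $\ell^\infty$ by hypothesis and the growth estimate, so the open mapping theorem yields $C>0$ such that every $\alpha\in\ell^\infty$ admits an interpolant $\varphi_\alpha$ with $\|\varphi_\alpha\|_M\le C\|\alpha\|_\infty$. Applying $M_{\varphi_\alpha}^*$ to a finite combination $g=\sum b_i\hat\ell_{\lambda_i}$ and using $\|M_{\varphi_\alpha}^*\|\le C\|\alpha\|_\infty$ gives the master inequality
\[
  \Bigl\|\sum_i b_i\overline{\alpha_i}\,\hat k_{\lambda_i}\Bigr\|^2 \le C^2\|\alpha\|_\infty^2\,\Bigl\|\sum_i b_i\hat\ell_{\lambda_i}\Bigr\|^2
\]
valid for every finite $(b_i)$ and every $\alpha\in\ell^\infty$. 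To obtain $G(\ell)\ge C^{-2}I$, choose $\alpha_i=\epsilon_i\in\{\pm 1\}$ independent random signs on $\operatorname{supp}(b)$: the right-hand side does not depend on $\epsilon$ while the expectation of the left-hand side is $\sum|b_i|^2$, yielding $\|b\|_2^2\le C^2\|\sum b_i\hat\ell_{\lambda_i}\|^2$. To obtain $G(k)\le C^2 I$, use the coupled substitution $b_i=c_i\epsilon_i$ and $\alpha_i=\epsilon_i d_i$ for arbitrary finite $(c_i),(d_i)$. Then $b_i\overline{\alpha_i}=c_i\overline{d_i}$ is independent of $\epsilon$ and $\|\alpha\|_\infty=\|d\|_\infty$; averaging over $\epsilon$ on the right yields $\|\sum c_i\overline{d_i}\hat k_{\lambda_i}\|^2\le C^2\|d\|_\infty^2\|c\|_2^2$. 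For any finite $(a_i)$ the choices $c_i=|a_i|$ and $d_i=\overline{a_i}/|a_i|$ (both set to zero where $a_i=0$) give $a_i=c_i\overline{d_i}$, $\|c\|_2=\|a\|_2$ and $\|d\|_\infty\le 1$, so $\|\sum a_i\hat k_{\lambda_i}\|^2\le C^2\|a\|_2^2$.

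For (ii): assume the Pick property, $G(k)\le KI$ and $G(\ell)\ge cI$. Given $\alpha\in\ell^\infty$, define $T\colon\vee\{\hat\ell_{\lambda_i}\}\to\h_k$ on the algebraic span by $T\hat\ell_{\lambda_i}=\overline{\alpha_i}\hat k_{\lambda_i}$. The bound $G(\ell)\ge cI$ makes the $\hat\ell_{\lambda_i}$ linearly independent so $T$ is well-defined, and for $g=\sum b_i\hat\ell_{\lambda_i}$ the estimate $\|Tg\|^2\le K\|\alpha\|_\infty^2\|b\|_2^2\le (K/c)\|\alpha\|_\infty^2\|g\|^2$ shows $T$ is bounded. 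The extension principle for pairs with the Pick property, recalled in the paragraph preceding the proposition, then produces $\varphi\in\m(\h_k,\h_\ell)$ with $M_\varphi^*$ extending $T$, so $\varphi(\lambda_i)=\alpha_i\|\ell_{\lambda_i}\|/\|k_{\lambda_i}\|$ as required.

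The main obstacle is the derivation of $G(k)\le C^2 I$ from the master inequality: a naive randomization of either $b$ or $\alpha$ alone yields only one inequality, and one must break the symmetry with the coupled substitution $b_i=c_i\epsilon_i$, $\alpha_i=\epsilon_i d_i$ to decouple the two sides and extract the Bessel bound on $\hat k_{\lambda_i}$.
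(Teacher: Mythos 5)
Your proof is correct and follows essentially the same route as the paper's: for (i), an open mapping theorem argument yields a master norm inequality, which is then unwound by averaging over random scalars to extract the two Grammian bounds; for (ii), the Grammian bounds give boundedness of the map $\hat\ell_{\lambda_i}\mapsto\overline{\alpha_i}\hat k_{\lambda_i}$, which the Pick property then promotes to a global multiplier. The only cosmetic difference is that you randomize with real signs $\epsilon_i\in\{\pm1\}$, forcing the extra factorization $a_i=c_i\overline{d_i}$ with $c_i=|a_i|$, $d_i=\overline{a_i}/|a_i|$, whereas the paper randomizes with complex unimodular scalars $\exp(it_j)$ and absorbs the phase directly by setting $c_j=\exp(it_j)a_j$, $\alpha_j=\exp(it_j)$, avoiding that step.
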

\bp
Let $\nu(\lambda) = ||\ell_{\lambda}|| / ||k_{\lambda}||$, and observe
that a bounded operator $T: \mathcal H_\ell \to \mathcal H_k$ maps
$\hat \ell_{\lambda}$ to $\overline{\alpha(\lambda)} \hat k_{\lambda}$ if and only
if the function $\varphi(\lambda) = \nu(\lambda) \alpha(\lambda)$ belongs
to $\m(\mathcal H_k,\mathcal H_\ell)$ and $T = M_\varphi^*$.

  (i) Since $(\lambda_i)$ is interpolating, the bounded linear map
  \begin{equation*}
    \m(\mathcal H_k, \mathcal H_\ell) \to \ell^\infty, \quad \varphi \mapsto (\nu(\lambda_i)^{-1} \varphi(\lambda_i)),
  \end{equation*}
  is surjective. An application of the open mapping theorem therefore shows that there exists
  $M > 0$ such that for every  sequence $(\alpha_i)$ in the unit ball of $\ell^\i$ and every
  finitely supported sequence $(c_i)$ of complex numbers, we have
\[
\Big\| \sum_i c_i \bar \alpha_i \hat{k}_{\l_i} \Big\|_{\h_k}^2 \ \leq \ M
\Big\| \sum_i c_i \hat{\ell}_{\l_i} \Big\|_{\h_\ell}^2 .
\]
Choosing $\alpha_j = \exp(it_j)$ and integrating w.r.t. each $t_j$, we get $G(\ell)$ is bounded below;
letting $c_j = \exp(it_j) a_j$ and $\alpha_j = \exp(it_j)$ and integrating, we get
$G(k)$ is bounded above (cf. the proof of \cite[Theorem 9.19]{ampi}).

(ii) Suppose there are constants $M_1, M_2 > 0$ so that
\[
M_1 \Big\| \sum_j a_j \hat{k}_{\l_j} \Big\|^2 \ \leq \ \sum_j |a_j|^2 \ \leq \ M_2
 \Big\| \sum_j a_j \hat{\ell}_{\l_j} \Big\|^2
\]
holds for every finitely supported sequence of complex numbers $(a_j)$.
Let $(\alpha_j)$ belong to the unit ball of $\ell^\i$. Then
for every finitely supported sequence of complex numbers $(a_j)$, we have
\begin{equation*}
  \Big\| \sum_{j} a_j \bar{\alpha_j} \hat k_{\lambda_j} \Big\|^2
  \le \frac{1}{M_1} \sum_j |a_j \bar{\alpha_j}|^2 \le \frac{1}{M_1} \sum_j |a_j|^2
  \le \frac{M_2}{M_1} \Big\| \sum_j a_j \hat \ell_{\lambda_j} \Big\|^2,
\end{equation*}
hence there exists a unique bounded linear operator $R$ from $\vee \{ \ell_{\lambda_i}: i \in \mathbb N \}$
into $\mathcal H_{k}$ which maps $\hat \ell_{\lambda_j}$ to $\bar \alpha_j \hat k_{\lambda_j}$.
Since the pair has the Pick property, this implies that there is a $\varphi\in \m(\h_k,\h_\ell)$ with $\varphi(\l_i)=\alpha_i \frac{\|\ell_{\l_i}\|}{\|k_{\l_i}\|}$ for all $i$. Thus,
$(\l_i)$ is interpolating for $\m(\h_k,\h_\ell)$.
\ep

For the remainder of this
 section, $\cp$ will be an irreducible complete Pick kernel normalized at some point, and
\be
\label{eqdx1}
\ell(z,w) \= \cp(z,w) g(z,w) \ee
where $g$ is also a kernel.
As mentioned in the Introduction for a concrete example, think of $\cp$ as the \sz kernel, and $\ell$ as the Bergman kernel on the disk.

As $g$ is a kernel, there is a map $\Gamma : X \to \M^*$, for some Hilbert space $\M$, so that
\be
\label{eqxg}
g(z,w) \= \Gamma(z) \Gamma(w)^* .
\ee

\begin{proposition}
\label{prope2}  Let $\cp$ and $\ell$ be as in \eqref{eqdx1}.
Then the pair $(\cp, \ell)$ has the Pick property.
\end{proposition}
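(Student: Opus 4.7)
The plan is to use the factorization $\ell = sg$ to isometrically embed $\h_\ell$ into the vector-valued space $\h_s \otimes \M$ and then translate the Pick problem for the pair $(s,\ell)$ into a commutant lifting problem for $s$, where the complete Pick property of $s$ takes over. Writing $g(z,w) = \Gamma(z)\Gamma(w)^*$ as in \eqref{eqxg} and using the Riesz identification to obtain $g(z,w) = \langle \gamma(w), \gamma(z)\rangle_\M$ for some $\gamma : X \to \M$, I would define $V : \h_\ell \to \h_s \otimes \M$ on reproducing kernels by $V\ell_w = s_w \otimes \gamma(w)$. The identity $\langle V\ell_w, V\ell_z\rangle = s(z,w) g(z,w) = \ell(z,w)$ ensures that $V$ extends to an isometry. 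A short calculation with the reproducing property then shows that for any contractive column multiplier $\Phi \in \m(\h_s, \h_s\otimes\M)$, the composite $V^* M_\Phi$ equals multiplication by the scalar function $\varphi(z) := \langle \Phi(z), \gamma(z)\rangle_\M$; in particular $\varphi \in \m(\h_s,\h_\ell)$ with $\|\varphi\|_M \le \|\Phi\|_M$.

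Given Pick data $(\lambda_i,w_i)_{i=1}^N$ satisfying the positivity hypothesis $[\ell(\lambda_j,\lambda_i) - w_j\overline{w_i}\, s(\lambda_j,\lambda_i)]_{i,j} \ge 0$, I would introduce the partially defined operator $T : \mathrm{span}\{s_{\lambda_i} \otimes \gamma(\lambda_i)\} \to \h_s$ sending $s_{\lambda_i} \otimes \gamma(\lambda_i)$ to $\overline{w_i}\, s_{\lambda_i}$. Expanding squared norms of the sums $\sum c_i\, s_{\lambda_i}\otimes\gamma(\lambda_i)$ and $\sum c_i \overline{w_i}\, s_{\lambda_i}$ produces the matrix entries $\ell(\lambda_j,\lambda_i)$ and $w_j\overline{w_i}\, s(\lambda_j,\lambda_i)$ respectively, so the Pick hypothesis is precisely the contractivity $\|T\|\le 1$. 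Moreover, both $\mathrm{span}\{s_{\lambda_i}\otimes\gamma(\lambda_i)\}$ and $\mathrm{span}\{s_{\lambda_i}\}$ are spanned by kernel functions and hence are invariant under $M_\psi^* \otimes I_\M$ and $M_\psi^*$ respectively for every scalar multiplier $\psi$ of $\h_s$, and a direct check shows that $T$ intertwines these restricted adjoint multiplications.

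The decisive step is then to extend $T$ to a contractive operator on all of $\h_s \otimes \M$ of the form $M_\Phi^*$ for some column multiplier $\Phi \in \m(\h_s,\h_s\otimes\M)$. This is the content of the commutant lifting theorem in the complete Pick setting (essentially due to Ball--Trent--Vinnikov and McCullough--Trent) and is the key structural consequence of the complete Pick property of $s$. Once such a $\Phi$ has been produced, the identity $M_\Phi^*(s_{\lambda_i}\otimes\gamma(\lambda_i)) = \langle \gamma(\lambda_i),\Phi(\lambda_i)\rangle s_{\lambda_i}$ combined with $T(s_{\lambda_i}\otimes\gamma(\lambda_i)) = \overline{w_i}\, s_{\lambda_i}$ forces $\langle \Phi(\lambda_i),\gamma(\lambda_i)\rangle = w_i$, so the associated scalar $\varphi(z) = \langle \Phi(z),\gamma(z)\rangle_\M$ from the first paragraph solves the Pick problem with $\|\varphi\|_M \le 1$. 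The main obstacle is the commutant lifting step; everything else amounts to bookkeeping about the isometry $V$ and the partial operator $T$.
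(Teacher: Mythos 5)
Your proof is correct and follows essentially the same route as the paper: both factor $g(z,w) = \Gamma(z)\Gamma(w)^*$ (equivalently $\langle \gamma(w),\gamma(z)\rangle_\M$), push the two-kernel Pick problem into the vector-valued space $\h_s \otimes \M$, verify that contractivity of the densely defined $T$ is exactly the Pick positivity hypothesis, and then invoke the structure theory of the complete Pick kernel $s$ to produce the interpolant. The one difference is presentational: the paper packages the decisive step as Leech's theorem (the Toeplitz corona theorem for complete Pick kernels), while you package it as commutant lifting on the $\ast$-coinvariant span of the $s_{\lambda_i} \otimes \gamma(\lambda_i)$ — these are equivalent consequences of the complete Pick property and are attributed to the same sources.
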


\bp
Suppose $\F \subseteq X$, and $T$ is a  contraction defined by
\beq
T: \vee \{ \ell_\lambda \ : \ \l \in \F \} & \to & \h_s \\
\hat{\ell}_\l &\mapsto & \overline{\alpha(\l)} \hat{\cp}_\l.
\eeq
Let $\nu(\l) = \| \ell_\lambda \| / \| \cp_\l \|$.
Then
\be
\label{eqe4}
\left[ \Gamma(z) \Gamma(w)^* -  \alpha(z) \nu(z) \overline{\alpha(w) \nu(w)} \right] \cp(z,w) \geq  0.
\ee
So by Leech's theorem (also known as the Toeplitz corona theorem) for complete Pick kernels
\cite{at03, btv01}, see also \cite[Theorem 8.57]{ampi}, there is a function $\Delta$ of norm at most one, defined on all
of $X$, such that
\be
\label{eqe5}
 \Gamma(z)\Delta(z) \= \nu(z) \alpha(z)  \quad \forall z \in \F.
\ee
Use \eqref{eqe5} to extend $\alpha$ to all of $X$; and then
\eqref{eqe4} will hold on $X \times X$.
This extended $\alpha$ will then give the desired extension of $T$.
\ep

Let us find $\Delta$ explicitly.
From \eqref{eqe4}, we can  represent the left hand-side as
$\Theta(z) \Theta(w)^*$, where $\Theta$ takes values in $B(\L,\C)$
for some auxiliary Hilbert space $\L$:
\[
\left[ \Gamma(z) \Gamma(w)^* -  \alpha(z) \nu(z) \overline{\alpha(w) \nu(w)} \right] \cp(z,w)
\= \Theta(z) \Theta(w)^* .
\]
  Then,
 writing $\phi(z) = \alpha(z) \nu(z)$ and using Theorem~\ref{thmb1},
 we get for $z,w$ in $\F$:
\be
\label{eqe9}
\left[  \Gamma(z) \Gamma(w)^* - {\phi(z)}\overline{\phi(w)} \right]
\=
\Theta(z) \Theta(w)^*
(1 - \la b(z) , b(w) \ra) .
\ee
Define
\begin{eqnarray*}
\beta(z) : \L & \to & \ell^2_d \otimes \L \\
u & \mapsto & \overline{b(z)} \otimes u .
\end{eqnarray*}
Let
$ \E(z) := \beta(z)^*$, so
$$ \E(z) : \sum \xi_j \otimes u_j \mapsto \sum \langle \xi_j , \overline{b(z)} \ra u_j .
$$
A lurking isometry argument on \eqref{eqe9} gives an isometry
\[
U
:
\begin{pmatrix}
\Gamma(w)^* \\
\beta(w)  \Theta(w)^*
\end{pmatrix}
\ \mapsto \
\begin{pmatrix}
{\overline{\phi(w)}} \\  \Theta(w)^*
\end{pmatrix},
\]
where the domain of $U$ is the span of the vectors on the left as $w$ ranges over $\F$.
By enlarging the auxiliary Hilbert space $\mathcal L$ if necessary, we may extend $U$ to a unitary on
the whole space, and write it as
\[
U
\=
\begin{pmatrix}
A^*&C^*\\B^*&D^*
\end{pmatrix} .
\]

We get that for $w \in \F$,
\be
\label{eqe10}
\overline{{\phi(w)}} \= [ A^* + C^* \beta(w) ( I - D^* \beta(w) )^{-1} B^* ] \Gamma(w)^*.
\ee

Define $\Delta(w)$, for all $w$ in $X$, to be the adjoint of the quantity in brackets in \eqref{eqe10}:
\be
\label{eqe13}
\Delta(w) \= A+ B \E(w) ( I -   D \E(w))^{-1}   C ,
\ee
so
\[
\phi(w) = \Gamma(w) \Delta(w).
\]

Since $U$ is a unitary, we get
\[
[I - \Delta(z) \Delta(w) ^* ]\ s(z,w) \geq \
B ( I - \E(z) D)^{-1}
 (I - D^* \E(w)^*)^{-1} B^* ,
\]
and hence $\| \Delta \| \leq 1 $ as a multiplier from
$\h_s$ to $\M \otimes \h_s$.

So we have proved a realization formula.
Taking $\F = X$ we get:
\begin{proposition}
Every contractive multiplier $\phi$ in ${\rm Mult}(\h_\cp, \h_\ell)$ can be represented as
 in $\phi(w) = \Gamma(w) \Delta(w)$, where $\Gamma$ is as in \eqref{eqxg}, and
 $\Delta$ is a contractive multiplier from $\hs$ to $\M \otimes \hs$, given by a realization formula
 \eqref{eqe13}.
\end{proposition}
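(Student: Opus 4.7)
The plan is to show that this proposition is a direct consequence of the realization construction carried out in the paragraphs immediately preceding its statement, applied with $\F = X$ and an arbitrary contractive $\phi$ in place of the abstract data $(\alpha, \nu)$.

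First, I would take a contractive multiplier $\phi \in \Mult(\h_s, \h_\ell)$ and translate contractivity into a positivity condition on a kernel. Since $\|M_\phi\| \le 1$ is equivalent to $\ell(z,w) - \phi(z)\overline{\phi(w)}\, s(z,w) \ge 0$, and since $\ell = gs$ with $g(z,w) = \Gamma(z)\Gamma(w)^*$, this condition reads
\[
\bigl[\Gamma(z)\Gamma(w)^* - \phi(z)\overline{\phi(w)}\bigr]\, s(z,w) \ge 0, \qquad z,w \in X.
\]
This is precisely the hypothesis \eqref{eqe4} with $\F = X$ and $\phi(w) = \alpha(w)\nu(w)$. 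The associated operator $T : \vee\{\ell_\lambda : \lambda \in X\} \to \h_s$ sending $\hat\ell_\lambda \mapsto \overline{\alpha(\lambda)}\hat{s}_\lambda$ is thus a contraction (in fact $T = M_\phi^*$).

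Second, I would invoke the Leech/Toeplitz corona theorem for complete Pick kernels, exactly as in the proof of Proposition \ref{prope2}, to obtain a contractive multiplier $\Delta \in \Mult(\h_s, \M \otimes \h_s)$ with $\Gamma(w)\Delta(w) = \phi(w)$ for all $w \in X$. This already proves the first part of the claim. To extract the realization formula, I would then run the lurking isometry argument explicitly: factor the left-hand side of \eqref{eqe4} as $\Theta(z)\Theta(w)^*(1 - \langle b(z), b(w)\rangle)$ using the normalized complete Pick form of $s$ from Theorem \ref{thmb1}, which yields the identity \eqref{eqe9}. The vectors
\[
\begin{pmatrix} \Gamma(w)^* \\ \beta(w)\Theta(w)^* \end{pmatrix} \ \longmapsto \ \begin{pmatrix} \overline{\phi(w)} \\ \Theta(w)^* \end{pmatrix}
\]
then extend to an isometry $U$, which (after enlarging $\L$ if necessary) is dilated to a unitary $U = \bigl(\begin{smallmatrix} A^* & C^* \\ B^* & D^* \end{smallmatrix}\bigr)$. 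Solving for $\Delta$ via the transfer function mechanism gives \eqref{eqe13}, and the unitarity of $U$ forces the kernel inequality $[I - \Delta(z)\Delta(w)^*]\, s(z,w) \ge 0$, confirming $\|\Delta\|_M \le 1$.

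The only step that requires any care is the dilation of the partially defined isometry $U$ to a unitary on the whole of the two ambient spaces; this is standard but needs the freedom to enlarge $\L$. Everything else is bookkeeping with the auxiliary Hilbert spaces $\M$, $\L$, and $\ell^2_d \otimes \L$ and a direct Neumann-series expansion of $(I - D\E(w))^{-1}$. The main conceptual obstacle — the existence of $\Delta$ with $\Gamma\Delta = \phi$ — has already been overcome by the complete Pick Toeplitz corona theorem, so the proposition really is a clean consequence of what has been done.
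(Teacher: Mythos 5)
Your proposal is correct and takes essentially the same route as the paper: the paper's own "proof" of this proposition consists of the preceding paragraphs (Proposition~\ref{prope2} plus the lurking isometry construction of~$\Delta$), followed by the single remark that one should take $\F = X$, which is exactly the observation you make. The one small point worth being careful about, which you flag, is that the partially defined isometry $U$ need only be extended to a unitary after enlarging the auxiliary space $\L$, and that the inequality $[I - \Delta(z)\Delta(w)^*]\,s(z,w) \ge 0$ derived from unitarity of $U$ is what certifies $\Delta$ as a contractive multiplier into $\M \otimes \h_s$.
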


For $\cH_s = H^2$ and certain weighted Hardy spaces $\cH_{\ell}$,
this result was previously obtained (with essentially the same proof) by Ball
and Bolotnikov \cite[Theorem 2.1]{BB17}.

In the proof of Theorem \ref{MixedInterpolation}, we require the following lemma.
\begin{lemma} \label{GrammianLemma2} Let $\cp$ and $\ell$ be as above, and let $(\lambda_i)\subseteq X$ be any sequence.

(a) If $G(\cp)$ is bounded, then $G(\ell)$ is bounded. Consequently, if $(\lambda_i)$ satisfies the Carleson condition for $\h_{\cp}$, then it satisfies the Carleson condition for $\h_\ell$.

(b) Similarly, if $G(\cp)$ is bounded below, then $G(\ell)$ is bounded below.

(c) If $(\lambda_i)$ is interpolating for $\h_\cp$, then it is interpolating for $\h_\ell$.

\end{lemma}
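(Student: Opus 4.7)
My plan starts from the factorization
\[
G(\ell)_{ij} \;=\; \frac{\ell(\lambda_j,\lambda_i)}{\|\ell_{\lambda_i}\|\,\|\ell_{\lambda_j}\|} \;=\; G(\cp)_{ij}\, G(g)_{ij},
\]
so that $G(\ell) = G(\cp)\circ G(g)$, the Schur (Hadamard) product of infinite matrices. Since $g$ is positive definite, $G(g)$ is positive semidefinite with all diagonal entries equal to $1$.

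For part (a) I invoke the classical Schur multiplier inequality: for any $B\ge 0$ acting on $\ell^2$, the Schur multiplication $A\mapsto A\circ B$ has operator norm at most $\max_i B_{ii}$. Applied with $A=G(\cp)$ and $B=G(g)$, this gives $\|G(\ell)\| \le \|G(\cp)\|$. Combined with Lemma \ref{elementaryGrammian}(a), this proves both statements in (a).

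For part (b) the complete Pick structure of $\cp$ is essential, since a bounded-below $A$ and a positive semidefinite $B$ with unit diagonal need not produce a bounded-below $A\circ B$, so no purely matrix-theoretic argument can work. Assuming $G(\cp)$ is bounded below, Theorem \ref{thmd2}(a) produces a multiplier $\Psi\in\m(\h_\cp\otimes\ell^2,\h_\cp)$, of some multiplier norm $M$, such that $\Psi(\lambda_i)=e_i^*$. The statement that $\Psi$ has multiplier norm at most $M$ is equivalent to positivity of the kernel $(M^2 - \Psi(z)\Psi(w)^*)\,\cp(z,w)$. Multiplying by the positive definite kernel $g$ preserves positivity, so
\[
(M^2 - \Psi(z)\Psi(w)^*)\,\ell(z,w) \;\ge\; 0,
\]
which says $\Psi \in \m(\h_\ell\otimes\ell^2,\h_\ell)$ with norm at most $M$. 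It remains to deduce that $G(\ell)$ is bounded below; this direction requires no Pick hypothesis on $\ell$. Indeed, from $M_\Psi^*\ell_{\lambda_i}=e_i\otimes \ell_{\lambda_i}$ one obtains
\[
\sum_i |a_i|^2\,\|\ell_{\lambda_i}\|^2 \;=\; \Bigl\|M_\Psi^* \sum_i a_i\, \ell_{\lambda_i}\Bigr\|^2 \;\le\; M^2\, \Bigl\|\sum_i a_i\, \ell_{\lambda_i}\Bigr\|^2,
\]
which, after replacing $a_i$ by $a_i/\|\ell_{\lambda_i}\|$, is exactly a lower Riesz bound for $(\hat\ell_{\lambda_i})$ and hence for $G(\ell)$.

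For part (c), Lemma \ref{elementaryGrammian}(b) characterizes interpolation for $\h_\cp$ as $G(\cp)$ being both bounded and bounded below; (a) and (b) transfer both properties to $G(\ell)$, and a second application of Lemma \ref{elementaryGrammian}(b) finishes the proof. The main obstacle is (b): the CNP property of $\cp$ is what allows the lower bound on $G(\cp)$ to be repackaged as a multiplier, so that positivity of $g$ carries it across from $\h_\cp$ to $\h_\ell$.
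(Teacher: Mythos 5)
Your factorization $G(\ell) = G(\cp)\circ G(g)$, your Schur multiplier argument for (a), and your deduction of (c) from (a)+(b) are all fine and match the paper. The problem is your treatment of (b) and, more specifically, the claim motivating it. You assert that ``a bounded-below $A$ and a positive semidefinite $B$ with unit diagonal need not produce a bounded-below $A\circ B$, so no purely matrix-theoretic argument can work,'' and therefore route the proof through Theorem~\ref{thmd2}(a) and multiplier norms. But that claim is false in the relevant setting: $G(\cp)$ is a Gram matrix, hence automatically positive semidefinite, so ``bounded below'' means $G(\cp)\ge cI$ for some $c>0$. The Schur product theorem then gives $(G(\cp)-cI)\circ G(g)\ge 0$, and since $I\circ G(g)=I$ (as $G(g)$ has unit diagonal), this simplifies to $G(\ell)-cI\ge 0$. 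That is exactly the paper's proof of (b): the same Schur product observation that handles (a) handles (b), with no Pick hypothesis anywhere.

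Your detour is not logically wrong --- the steps from the existence of $\Psi$ through positivity of $(M^2-\Psi(z)\Psi(w)^*)\ell(z,w)$ to the lower Riesz bound are correct --- but it is an unnecessary invocation of heavy machinery (Theorem~\ref{thmd2}(a), which relies on the complete Pick property and, indirectly, Leech's theorem) to prove something that is a one-line Schur product computation. Worse, the false assertion that motivated the detour is exactly the kind of misconception that could lead you astray elsewhere: for positive operators, ``bounded below'' is an operator inequality against $cI$, and that inequality passes through Schur multiplication by a PSD matrix with unit diagonal in both directions. You should replace the argument for (b) by the direct Schur computation and delete the claim that the Pick structure is essential here.
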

\begin{proof} Note that the Gram matrix $G(g)$ is positive and that the Schur product of $G(g)$ with the identity matrix $I$ equals $I$. Hence (a) and (b) of the Lemma follow from the Schur product theorem and the observation that $G(\ell)$ equals the Schur product of $G(g)$ and $G(s)$. For example, the boundedness of $G(\cp)$ implies that there is $c>0$ such that $cI-G(\cp)$ is positive semi-definite, hence taking the Schur product with $G(g)$ we obtain $cI- G(\ell)$ is positive semi-definite, hence $G(\ell)$ is bounded. The statements about the Carleson conditions and part (c) now follow from Lemma \ref{elementaryGrammian}.
\end{proof}

We will now prove Theorem \ref{MixedInterpolation}.

\bp (a) By Proposition \ref{prope2} the pair $(\cp, \ell)$ has the Pick property, hence Proposition \ref{prope1} implies that a sequence is interpolating for $\m(\h_{\cp},\h_\ell)$ if and only if $G(\cp)$ is bounded above and $G(\ell)$ is bounded below. Now part (a) follows from Lemmas \ref{elementaryGrammian} and \ref{GrammianLemma2}.

(b) If a sequence is interpolating for $\m(\h_s)$, then it must be interpolating for $\h_s$. Thus  Lemmas \ref{elementaryGrammian} and \ref{GrammianLemma2} imply that it is interpolating for $\h_\ell$ and that it satisfies the Carleson condition for $\h_s$. Thus by (a) the sequence is interpolating for $\m(\h_s,\h_\ell)$. Conversely, if an $\h_s$ separated sequence is  interpolating for $\m(\h_s,\h_\ell)$, then by (a) it satisfies the Carleson condition (C) for $\h_s$ and hence it follows from Theorem \ref{CNPintThm} that it must be interpolating for $\m(\h_s)$.
\ep

\vskip 10 pt
We now turn to the proof of Theorem \ref{thmsst}. For the reader's convenience we restate the theorem here.

\begin{theorem} \label{thmsstAgain} Let $s_1,s_2$ be  normalized complete Pick kernels on $X$ such that $s_2/s_1$ is positive semi-definite, and  let $t\ge 1$.

Then a sequence is interpolating for $\m(\h_{s_1},\h_{s_2^t})$ if and only if it satisfies the $\h_{s_1}$ Carleson condition (C) and is $\h_{s_2}$ separated (S).
\end{theorem}
\bp Note that for $t \ge 1$ \cite[Remark 8.10]{ampi} implies that $s_2^{t-1}$ is positive semi-definite, hence the hypothesis and the Schur product theorem imply that $g= s_2^t/s_1$ is positive semi-definite and we can apply Theorem \ref{MixedInterpolation} (a).

Suppose a sequence is $\m(\h_{s_1},\h_{s_2^t})$ interpolating, then by Theorem \ref{MixedInterpolation} it satisfies the Carleson condition for $\h_{s_1}$ and it is interpolating for $\h_{s_2^t}$. But then it must be  $\h_{s_2^t}$ separated, which is the same as being $\h_{s_2}$ separated.

Conversely, assume a sequence satisfies the Carleson condition for $\h_{s_1}$ and it is  $\h_{s_2}$ separated. By Theorem \ref{MixedInterpolation} we must show that the sequence is interpolating for $\h_{s_2^t}$.
By Lemma  \ref{GrammianLemma2} the sequence satisfies the $\h_{s_2}$ Carleson condition, thus by Theorem \ref{CNPintThm} it must be interpolating for $\m(\h_{s_2})$ and for $\h_{s_2}$. But now since $s_2^t=s_2^{t-1}s_2$ we can apply Lemma \ref{GrammianLemma2} (c) to conclude that the sequence is interpolating for $\h_{s_2^t}$.
\ep

The following is an example, where Theorem \ref{MixedInterpolation} applies, but Theorem \ref{thmsst} does not. In the case of the example there are $\m(\h_s,\h_{\ell})$ interpolating sequences that are not $\m(\h_s)$ interpolating.

\begin{example} \label{ExNotSeparated}
{\rm
Let $\h_s=H^2$, let $\mathcal{H}_\ell$ be the weighted Bergman space with weight $\exp(-\frac1{1-|z|^2})$, and let
$$z_j=1-2^{-j}, \quad w_j=z_j+i2^{-5j/4},\,\,j\ge 0.$$
By \cite[Theorem 2.4]{bdk07}
 the union of $(z_j)$ and $(w_j)$ is interpolating for $\mathcal{H}_\ell$, so $G(\ell)$ is bounded below.
This union is not weakly separated in $H^\i(\D)$, but both sequences $(z_j)$ and $(w_j)$ are interpolating for $H^2$,
so $G(\cp)$ is bounded.
}
\end{example}

\section{Applications and Questions}
\label{secf}

S. Axler  proved in \cite{ax92} that the multiplier algebra of the Dirichlet space has infinite interpolating sequences.
He did this by using the Rosenthal-Dor theorem to prove that any sequence either has a subsequence that is interpolating, or a subsequence along which every function in the multiplier algebra has a limit; then he exhibited sequences in which the latter does not happen.
Not all Hilbert function spaces with complete Pick kernels admit infinite interpolating sequences. If $\alpha \ge 0$, then Kaluza's lemma (see \cite{SS61} or \cite[Corollary 7.41]{ampi}) implies that $s_w(z) =\sum_{n=0}^\infty \frac{(\overline{w}z)^n}{(n+1)^\alpha}$ defines a normalized irreducible complete Nevanlinna Pick kernel on the unit disk. If $\alpha >1$, then $\sup_{w\in \D}\|s_w\|<\infty$, and hence the Carleson condition (C) can never be satisfied for the constant function $f(z)=1 \in \h_s$.

 It is also easy to see that the converse to this last observation is true;
 special cases of this result also appear in \cite[Lemma 8.2]{Hartz15} and
 \cite[Proposition 9.1]{dhs15}.

\begin{proposition}
  \label{prop:interpolating_subsequence}
  Let $s$ be a normalized complete Pick kernel on a set $X$, and  $(\lambda_i)\subseteq X$.
  If  $\|s_{\lambda_i}\| \to \infty$, then $(\lambda_i)$ has a subsequence which is interpolating for $\h_s$ and for $\m(\h_s)$.
\end{proposition}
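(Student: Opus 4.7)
The plan is to extract a sufficiently sparse subsequence so that the normalized reproducing kernels $\hat s_{\lambda_i}$ are nearly orthonormal; both conclusions then follow immediately from Lemma~\ref{elementaryGrammian} and Theorem~\ref{CNPintThm}.

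By Theorem~\ref{thmb1} I may write $s(z,w) = (1 - \langle b(z), b(w)\rangle)^{-1}$ with $b(\lambda_0) = 0$ and $\|b(z)\| < 1$ on $X$. Setting $r_i = \|b(\lambda_i)\|$ and $\epsilon_i = 1 - r_i^2$, the hypothesis $\|s_{\lambda_i}\|^2 = 1/\epsilon_i \to \infty$ is equivalent to $\epsilon_i \to 0$. For a large constant $a > 1$ to be fixed below, I pass to a subsequence so that $\epsilon_{i+1} \le a^{-1}\epsilon_i$ for every $i$; consequently $\epsilon_m/\epsilon_n \le a^{-(m-n)}$ whenever $m \ge n$.

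For $n < m$, using Cauchy-Schwarz and $1 - r_n \ge \epsilon_n/2$, I estimate
\[
|\langle \hat s_{\lambda_n}, \hat s_{\lambda_m}\rangle|^2 = \frac{\epsilon_n \epsilon_m}{|1 - \langle b(\lambda_m), b(\lambda_n)\rangle|^2} \le \frac{\epsilon_n \epsilon_m}{(1 - r_n)^2} \le \frac{4\epsilon_m}{\epsilon_n} \le 4 a^{-(m-n)}.
\]
Thus the Gram matrix $G$ satisfies $|G_{nm}| \le 2 a^{-|n-m|/2}$ off the diagonal, and a Schur-test row-sum bound gives $\|G - I\|_{\rm op} \le 4/(a^{1/2} - 1)$. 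Taking $a$ large enough forces $\|G - I\|_{\rm op} < 1$, so $G$ is bounded above and bounded below. By Lemma~\ref{elementaryGrammian}(b) the subsequence is interpolating for $\h_s$; it satisfies the Carleson condition (C) by Lemma~\ref{elementaryGrammian}(a); and $d_{\h_s}(\lambda_n,\lambda_m)^2 = 1 - |G_{nm}|^2 \ge 1 - 4/a > 0$, so it is $\h_s$-separated. Theorem~\ref{CNPintThm} then yields that it is also interpolating for $\m(\h_s)$.

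The only substantive step is the off-diagonal estimate; once that is in place, the sparseness choice together with the results already cited close the argument, so I do not anticipate any serious obstacle.
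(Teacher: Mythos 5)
Your proposal is correct, and it lands on the same structural idea as the paper: pass to a sparse subsequence so that the Gram matrix $G$ is close to the identity, then invoke Lemma~\ref{elementaryGrammian}. The route to that point is slightly different, though, and worth noting. The paper's proof is soft: it uses the representation from Theorem~\ref{thmb1} only to get the pointwise bound $|s_w(z)| \le (1-\|b(w)\|)^{-1}$, deduces that the normalized kernels $\hat s_{\lambda_i}$ are weakly null, and then appeals to the classical fact that a weakly null sequence of unit vectors admits a Riesz subsequence. You instead go through the representation more aggressively, write $\|s_{\lambda_i}\|^{-2} = \varepsilon_i = 1 - \|b(\lambda_i)\|^2$, choose a geometrically decaying subsequence $\varepsilon_{i+1} \le a^{-1}\varepsilon_i$, and prove the explicit off-diagonal bound $|G_{nm}| \le 2a^{-|n-m|/2}$, closing with a Schur row-sum test. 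Your estimate chain is correct: Cauchy--Schwarz gives $|1 - \langle b(\lambda_m), b(\lambda_n)\rangle| \ge 1 - r_m r_n \ge 1-r_n \ge \varepsilon_n/2$, whence $|G_{nm}|^2 \le 4\varepsilon_m/\varepsilon_n$ for $n<m$, and the geometric sparseness finishes it. What your version buys is that it is fully quantitative and self-contained (no appeal to the Bessaga--Pe\l{}czy\'nski-type subsequence lemma), and it produces separation and the Carleson condition explicitly so that Theorem~\ref{CNPintThm} can be applied directly to get interpolation for $\m(\h_s)$; the paper's version is shorter but leaves that last step implicit via the known equivalence (IH) $\Leftrightarrow$ (IM) for complete Pick kernels. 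Both proofs ultimately rest on the same ingredients: Theorem~\ref{thmb1} and Lemma~\ref{elementaryGrammian}.
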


 \begin{proof}
   Assume  $\|s_{\lambda_i}\| \to \infty$.
  By Theorem \ref{thmb1}, we may write $s_w(z) = (1 - \langle b(z),b(w) \rangle)^{-1}$
  with $||b(w)|| \le 1$ for all $w \in X$, so
  for all $z,w\in X$ we have $|s_w(z)|\le (1-\|b(w)\|)^{-1}$. Thus, since finite linear combinations of reproducing kernels are dense in $\h_s$, it follows that the normalized kernels $\hat{s}_{\lambda_i}$ converge to 0 weakly. It is now easy to see that $\hat{s}_{\lambda_i}$ has a subsequence that is a Riesz sequence. Thus the conclusion follows from Lemma \ref{elementaryGrammian}.
 \end{proof}

In \cite{LechEssNormal} Lech shows the existence of a multiplication operator $M_\phi$ on the Dirichlet space that is not essentially normal.
His result can be generalized. We first require a routine lemma.
Recall 
that if $\cH$ is a reproducing kernel Hilbert space on $X$ with kernel $k$, the
pseudo metric $d_{\cH}$ on $X$ is defined by
\begin{equation*}
  1 - d_{\cH}(z,w)^2 = \frac{|\langle k_z,k_w \rangle|^2}{||k_z||^2 \, ||k_w||^2}, \quad
  z,w \in X.
\end{equation*}

\begin{lemma}
  \label{lem:sequences_to_infinity}
  Let $s$ be a normalized complete Pick kernel on a set $X$
  and let $(z_n)$ be a sequence in $X$ with $||s_{z_n}|| \to \infty$.
  \begin{enumerate}
    \item[(a)] For all $z \in X$, we have $\lim_{n \to \infty} d_{\cH}(z_n,z) = 1$.
    \item[(b)] If $(w_n)$ is another sequence in $X$ such that there exists $r < 1$ with
      $d_{\cH} (z_n,w_n) < r$ for all $n \in \bN$,
      then $\lim_{n \to \infty} ||s_{w_n}|| = \infty$.
  \end{enumerate}
\end{lemma}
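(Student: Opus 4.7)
My plan is to exploit the realization from Theorem~\ref{thmb1}, which lets us write $s_w(z) = (1 - \langle b(z), b(w) \rangle_{\mathcal{K}})^{-1}$ with $\|b(w)\| < 1$. The key reformulation is that $\|s_w\|^2 = s_w(w) = (1 - \|b(w)\|^2)^{-1}$, so the hypothesis $\|s_{z_n}\| \to \infty$ translates to $\|b(z_n)\| \to 1$, and proving $\|s_{w_n}\| \to \infty$ in (b) amounts to proving $\|b(w_n)\| \to 1$. The other tool is the Cauchy--Schwarz bound
\[
|\langle s_{z_n}, s_w \rangle| \;=\; \frac{1}{|1 - \langle b(z_n), b(w) \rangle|} \;\leq\; \frac{1}{1 - \|b(z_n)\| \, \|b(w)\|},
\]
valid for any $z_n, w \in X$.

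For part (a), I would plug these two facts into the definition of $d_{\mathcal{H}}$. The quantity $1 - d_{\mathcal{H}}(z_n,z)^2 = |\langle s_{z_n}, s_z\rangle|^2 / (\|s_{z_n}\|^2 \|s_z\|^2)$ is then bounded above by
\[
\frac{(1-\|b(z_n)\|^2)(1-\|b(z)\|^2)}{(1-\|b(z_n)\|\,\|b(z)\|)^2}.
\]
Since $\|b(z_n)\| \to 1$ while $\|b(z)\| < 1$ is fixed, the numerator tends to $0$ and the denominator tends to $(1-\|b(z)\|)^2 > 0$, which gives $d_{\mathcal{H}}(z_n,z) \to 1$.

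For part (b), the assumption $d_{\mathcal{H}}(z_n, w_n) < r$ reverses the inequality: combining the corresponding \emph{lower} bound for $|\langle s_{z_n}, s_{w_n}\rangle|^2$ with the Cauchy--Schwarz \emph{upper} bound yields
\[
(1 - \|b(z_n)\|^2)(1 - \|b(w_n)\|^2) \;>\; (1-r^2)\, (1 - \|b(z_n)\|\,\|b(w_n)\|)^2.
\]
Setting $p_n = 1 - \|b(z_n)\|$ and $q_n = 1 - \|b(w_n)\|$ and using $1 - \|b(z_n)\|\|b(w_n)\| = p_n + \|b(z_n)\| q_n \geq \|b(z_n)\| q_n$, this simplifies to an inequality of the form $q_n \leq C\, p_n / \|b(z_n)\|^2$ for large $n$, forcing $q_n \to 0$, i.e.\ $\|b(w_n)\| \to 1$.

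The main obstacle is simply the algebraic bookkeeping in (b): one has to pick the right way to exploit the elementary inequality $(p_n + \|b(z_n)\| q_n)^2 \geq \|b(z_n)\|^2 q_n^2$ to extract a bound on $q_n$ rather than something trivial like $\|b(z_n)\| \leq (1-r^2)^{-1/2}$. Once that trick is in hand, the estimate is immediate. No further results beyond Theorem~\ref{thmb1} are needed.
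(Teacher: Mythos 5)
Your proof is correct and rests on the same essential tool the paper uses, namely the realization $s_w(z) = (1 - \langle b(z), b(w)\rangle_{\mathcal K})^{-1}$ from Theorem~\ref{thmb1}, so the two arguments are close in spirit. The differences are in the bookkeeping. For part~(a) the paper deduces the result from the weak null convergence of $\hat{s}_{z_n}$ (obtained from the $z$-uniform bound $|s_w(z)| \le (1 - \|b(w)\|)^{-1}$ and density of finite kernel combinations, as in Proposition~\ref{prop:interpolating_subsequence}); your direct estimate of $1 - d_{\cH}(z_n,z)^2$ bypasses that and is equally valid. For part~(b) the paper takes a shorter route: since $s(w_n,w_n) \ge 1$, the identity $|s(z_n,w_n)|^2 = s(z_n,z_n)\,s(w_n,w_n)\,(1 - d_{\cH}(z_n,w_n)^2) \ge s(z_n,z_n)(1-r^2)$ already forces $|s(z_n,w_n)| \to \infty$, hence $\langle b(z_n),b(w_n)\rangle \to 1$, hence $\|b(w_n)\| \to 1$ by Cauchy--Schwarz; your $p_n,q_n$ manipulation reaches the same conclusion with a bit more algebra (the division by $q_n$ is legitimate since $\|b(w_n)\| < 1$ so $q_n > 0$). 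Both approaches are correct; the paper's~(b) is slightly slicker, yours is more explicit.
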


\begin{proof}
  The proof of Proposition \ref{prop:interpolating_subsequence} shows that the normalized kernels
  $\widehat s_{z_n}$ tend to $0$ weakly, so that
  $1 - d_{\cH} (z_n,z)^2 = |\langle \widehat s_{z_n}, \widehat s_z \rangle|$ tends to $0$, which establishes (a).
  
  To prove (b), note that
  \begin{equation*}
    |s(z_n,w_n)|^2 = s(z_n,z_n) s(w_n,w_n) ( 1 - d_{\cH}(z_n,w_n)^2)
    \ge s(z_n,z_n) (1 - r^2),
  \end{equation*}
  which tends to $\infty$. Therefore,
  writing
  $s_w(z) = (1 - \langle b(z),b(w) \rangle)^{-1}$
  with $||b(w)|| \le 1$ for all $w \in X$ (see Theorem \ref{thmb1}), we see that
  $\langle b(z_n),b(w_n) \rangle$ tends to $1$, which forces $||b(w_n)||$ to tend to $1$,
  so that $s(w_n,w_n)$ tends to $\infty$.
\end{proof}

Every interpolating sequence is separated in the pseudo metric $d_{\cH}$.
The following lemma allows us to find an interpolating sequence such that infinitely many
pairs of points have small distance.
\begin{lemma}
  \label{lem:two_sequences}
  Let $X$ be a connected topological space and let $\cH$ be a normalized complete Pick space on $X$ with jointly
  continuous kernel $s$. If $s$ is unbounded, then
  for every $\varepsilon > 0$, there exist sequences $(z_n)$ and $(w_n)$ in $X$
  whose union is interpolating such that
  $d_{\cH}(z_n,w_n) < \varepsilon$ for all $n \in \bN$.
\end{lemma}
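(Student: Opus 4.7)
The plan is to assemble the required sequences in two stages. First, combine the unboundedness of $s$ with the connectedness of $X$ to produce pairs $(\zeta_n,\eta_n)$ satisfying $\|s_{\zeta_n}\|,\|s_{\eta_n}\|\to\infty$ and $d_{\cH}(\zeta_n,\eta_n)=\varepsilon/2$. Second, thin out to a subsequence so that the interleaved union of normalized kernels becomes a Riesz sequence; by Lemma~\ref{elementaryGrammian}(b) this is exactly an interpolating sequence for $\h_s$. We may assume $\varepsilon<1$.

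For the first stage, pick any $(\zeta_n)\subseteq X$ with $\|s_{\zeta_n}\|\to\infty$, which exists because $s$ is unbounded. Joint continuity of $s$ together with $s(z,z)\ne 0$ makes $f_n := d_{\cH}(\zeta_n,\cdot)$ a continuous map $X\to[0,1]$. Fix any $z_0\in X$; Lemma~\ref{lem:sequences_to_infinity}(a) gives $f_n(z_0)\to 1$, so $f_n(z_0)>\varepsilon/2$ for all sufficiently large $n$. Since $X$ is connected, $f_n(X)$ is a connected subset of $\mathbb R$, hence an interval; as it contains $0=f_n(\zeta_n)$ together with a value exceeding $\varepsilon/2$, it contains $\varepsilon/2$, yielding $\eta_n\in X$ with $d_{\cH}(\zeta_n,\eta_n)=\varepsilon/2$. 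Lemma~\ref{lem:sequences_to_infinity}(b) applied with $r=\varepsilon/2$ then forces $\|s_{\eta_n}\|\to\infty$ as well.

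For the second stage, each pair Gramian is
\[
G_n = \begin{pmatrix} 1 & \alpha_n \\ \bar\alpha_n & 1 \end{pmatrix},\qquad |\alpha_n|^2 = 1-\tfrac{\varepsilon^2}{4},
\]
with eigenvalues $1\pm\sqrt{1-\varepsilon^2/4}$ bounded above by $2$ and below by a positive constant $\delta=\delta(\varepsilon)$. Since $\|s_{\zeta_n}\|,\|s_{\eta_n}\|\to\infty$, the argument from the proof of Proposition~\ref{prop:interpolating_subsequence} shows that $\hat s_{\zeta_n}\to 0$ and $\hat s_{\eta_n}\to 0$ weakly in $\h_s$. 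Inductively pick $n_1<n_2<\cdots$ so that at step $k$ every inner product of $\hat s_{\zeta_{n_k}}$ or $\hat s_{\eta_{n_k}}$ with any previously chosen normalized kernel has modulus less than $\delta\cdot 2^{-k}/100$; this is possible by weak convergence. The Gramian of the interleaved sequence $\hat s_{\zeta_{n_1}},\hat s_{\eta_{n_1}},\hat s_{\zeta_{n_2}},\hat s_{\eta_{n_2}},\ldots$ then decomposes as $D+E$, where $D$ is block-diagonal with the blocks $G_{n_k}$ (hence bounded by $2$ and below by $\delta$) and $E$ is Hermitian with every row sum, and by symmetry every column sum, bounded by $\delta/2$, so that $\|E\|<\delta/2$ by Schur's test. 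Consequently $D+E$ is bounded and bounded below. Setting $z_k=\zeta_{n_k}$ and $w_k=\eta_{n_k}$ gives the desired sequences; interpolation of their union for $\m(\h_s)$ additionally follows from Theorem~\ref{CNPintThm}.

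The hard part will be the first stage: producing companion points at a prescribed intermediate pseudo-metric distance relies essentially on the connectedness of $X$, the joint continuity of $s$, and the pointwise ``escape to infinity'' of $d_{\cH}(\zeta_n,\cdot)$ from Lemma~\ref{lem:sequences_to_infinity}(a). The second stage is a routine diagonal perturbation argument, exploiting the uniform invertibility of the two-point Gramian together with the weak convergence of the normalized kernels to $0$.
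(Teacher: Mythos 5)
Your proof is correct but takes a genuinely different route from the paper's, primarily in the final stage. Both proofs use the same first ingredient: joint continuity of $s$, connectedness of $X$, and Lemma~\ref{lem:sequences_to_infinity} to produce companion points at a controlled pseudo-metric distance. The divergence is in how the union is shown to be interpolating. The paper begins from an interpolating sequence $(z_n)$ with separation constant $5\delta$, places $w_n$ with $\delta < d(z_n,w_n) < 2\delta$, passes to a subsequence so that $(w_n)$ is also interpolating, and then verifies \emph{separation} of the union by the triangle inequality and the \emph{Carleson condition} by additivity, finally invoking Theorem~\ref{CNPintThm}. You instead start from any sequence with $\|s_{\zeta_n}\|\to\infty$ (no initial separation needed), fix $d_{\cH}(\zeta_n,\eta_n)=\varepsilon/2$ exactly, and then \emph{directly} perturb a block-diagonal Gramian: you exploit the weak null convergence of the normalized kernels to thin the pairs so that the off-block entries are summably small, and conclude boundedness above and below via a Schur-test estimate. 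Your approach thus lands on condition (IH) directly via Lemma~\ref{elementaryGrammian}(b), and then (IM) follows either from the Marshall--Sundberg/Bishop implication (IH)$\Rightarrow$(IM) or, as you note, from Theorem~\ref{CNPintThm} (the bounded-below Gramian gives separation and the bounded Gramian gives Carleson). The paper's route is shorter and relies only on the triangle inequality plus additivity; your route is more hands-on and is self-contained at the level of Gramians, at the cost of a slightly more delicate diagonal extraction. One cosmetic point: the claim that all $f_n(z_0)>\varepsilon/2$ holds only ``for all sufficiently large $n$,'' so the sequence should be reindexed before defining $\eta_n$, but this is harmless.
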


\begin{proof}
  For ease of notation, we will write $d = d_\cH$.
  Since $s$ is unbounded, there exists an interpolating sequence $(z_n)$ for $\Mult(\cH)$
  by Proposition \ref{prop:interpolating_subsequence}.
  In particular, there exists $\delta > 0$ such that $d(z_n,z_m) > 5 \delta$ if $n \neq m$.
  We may without loss of generality assume that $2 \delta < \varepsilon$
  and that $2 \delta < 1$.

  If $z \in X$, then continuity of $s$ shows that the map
  \begin{equation*}
    f: X \to [0,1), \quad x \mapsto d(x,z),
  \end{equation*}
  is continuous. Clearly, $f(z) = 0$ and $\lim_{n \to \infty} f(z_n) = 1$ by part (a)
  of Lemma \ref{lem:sequences_to_infinity},  so connectivity of $X$ implies that $f$
  is surjective.
  Therefore, for $n \in \bN$, we may choose
  \begin{equation*}
    w_n \in \{ x \in X: \delta < d(x,z_n) < 2 \delta\}.
  \end{equation*}
  Then $d(z_n,w_n) < \varepsilon$ for all $n \in \bN$.
  Since $s(z_n,z_n)$ tends to $\infty$ and since $s(z_n,w_n) < 2 \delta < 1$, part (b)
  of Lemma \ref{lem:sequences_to_infinity} shows that $s(w_n,w_n)$ tends to infinity
  as well.
  By passing to a subsequence simultaneously for $(z_n)$
  and $(w_n)$, we may assume that $(w_n)$ is also interpolating by Proposition \ref{prop:interpolating_subsequence}.

  It is straightforward to check that
  the union of the sequences $(z_n)$ and $(w_n)$ is $d_\cH$-separated by $\delta$.
  Moreover, since each sequence is individually interpolating, they both satisfy the Carleson condition,
  hence so does their union. An application of Theorem \ref{CNPintThm} now shows that the union of $(z_n)$
  and $(w_n)$ is interpolating.
\end{proof}

The following result is the desired generalization of Lech's theorem.
\begin{proposition}
\label{prxe1}
  Let $X$ be a connected topological space and let $\cH$ be a normalized complete Pick space on $X$ with jointly
  continuous kernel $s$. If $s$ is unbounded, then
  there there exists a multiplication operator on $\cH$ which is not essentially normal.
\end{proposition}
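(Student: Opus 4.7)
The plan is to construct a multiplier $\phi \in \m(\cH)$ whose self-commutator $[M_\phi^*, M_\phi] = M_\phi^*M_\phi - M_\phi M_\phi^*$ has a quadratic form that is bounded away from zero along a weakly null sequence of unit vectors; this rules out compactness of $[M_\phi^*, M_\phi]$ and hence essential normality of $M_\phi$.

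First I would fix $\varepsilon \in (0,1)$ and apply Lemma \ref{lem:two_sequences} to obtain sequences $(z_n), (w_n) \subseteq X$ with $d_\cH(z_n, w_n) < \varepsilon$ for all $n$ and whose union is interpolating for $\m(\cH)$. Interpolating the bounded datum that assigns $0$ to each $z_n$ and $1$ to each $w_n$ then produces a multiplier $\phi \in \m(\cH)$ satisfying $\phi(z_n) = 0$ and $\phi(w_n) = 1$ for every $n$.

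The key computation is a uniform lower bound on $\langle [M_\phi^*, M_\phi] \hat s_{z_n}, \hat s_{z_n}\rangle$. Because $\phi(z_n) = 0$, the identity $M_\phi^* \hat s_{z_n} = \overline{\phi(z_n)}\,\hat s_{z_n}$ shows this quadratic form equals $\|\phi \hat s_{z_n}\|^2$. Testing against $\hat s_{w_n}$ and using $\phi(w_n) = 1$, the reproducing property gives
\[
\langle \phi \hat s_{z_n}, \hat s_{w_n}\rangle \;=\; \phi(w_n) \langle \hat s_{z_n}, \hat s_{w_n}\rangle \;=\; \langle \hat s_{z_n}, \hat s_{w_n}\rangle,
\]
whose modulus equals $\sqrt{1 - d_\cH(z_n,w_n)^2} > \sqrt{1 - \varepsilon^2}$ by the definition of $d_\cH$. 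Cauchy--Schwarz and $\|\hat s_{w_n}\|=1$ then yield $\|\phi \hat s_{z_n}\|^2 > 1 - \varepsilon^2$ uniformly in $n$.

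To finish, I would note that Lemma \ref{lem:two_sequences} is constructed so that $\|s_{z_n}\| \to \infty$, and the weak convergence argument from the proof of Proposition \ref{prop:interpolating_subsequence} then gives $\hat s_{z_n} \to 0$ weakly in $\cH$. If $[M_\phi^*, M_\phi]$ were compact, the quadratic form above would tend to $0$, contradicting the uniform positive lower bound, so $M_\phi$ is not essentially normal. The crux of the argument is the very existence of $\phi$ separating values at pairs of points arbitrarily close in the pseudo-metric $d_\cH$; this is precisely what Lemma \ref{lem:two_sequences} provides by nesting such pairs inside a single interpolating sequence, so no genuine obstacle remains once that lemma is in hand.
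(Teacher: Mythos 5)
Your proof is correct and matches the paper's argument essentially step for step: apply Lemma \ref{lem:two_sequences} to get close pairs $(z_n,w_n)$ inside an interpolating set, interpolate $0$ at $z_n$ and $1$ at $w_n$ to get $\phi$, use $\phi(z_n)=0$ to reduce the self-commutator's quadratic form at $\hat s_{z_n}$ to $\|\phi\hat s_{z_n}\|^2$, bound that below by $|\langle\hat s_{z_n},\hat s_{w_n}\rangle|^2 = 1-d_\cH(z_n,w_n)^2$, and conclude via weak nullity of $(\hat s_{z_n})$. The only cosmetic difference is that you keep $\varepsilon\in(0,1)$ general while the paper fixes $\varepsilon=1/2$ to get the explicit bound $3/4$.
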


\begin{proof}
  We apply Lemma \ref{lem:two_sequences} with $\varepsilon = 1/2$ to find
  sequences $(z_n)$ and $(w_n)$ whose union is interpolating with
  \begin{equation*}
    d_{\cH}(z_n,w_n) < 1/2
  \end{equation*}
  for all $n \in \bN$. In particular, there exists a multiplier
  $\varphi \in \Mult(\cH)$ with $\varphi(z_n) = 0$
  and $\varphi(w_n) = 1$ for all $n \in \bN$. We claim that $M_\varphi$ is not essentially normal.

  Since $(z_n)$ is interpolating, $s(z_n,z_n)$ tends to $\infty$,
  which implies that the sequence of normalized kernels $\widehat s_{z_n}$ converges
  to zero weakly in $\cH$ by the proof of Proposition \ref{prop:interpolating_subsequence}.
  Therefore, it suffices to prove that
  \begin{equation*}
    ||(M_\varphi^* M_\varphi - M_\varphi M_\varphi^*) \widehat s_{z_n}||
  \end{equation*}
  does not converge to zero. To this end, we use the fact that $\varphi(z_n) = 0$ to see that the
  sequence above is bounded below by
  \begin{equation*}
    |\langle (M_\varphi^* M_\varphi - M_\varphi M_\varphi^*) \widehat s_{z_n}, \widehat s_{z_n} \rangle| =
    ||M_\varphi \widehat s_{z_n}||^2.
  \end{equation*}
  Using further
  that $\varphi(w_n) = 1$, we see that this quantity, in turn, is bounded below
  by
  \begin{equation*}
    |\langle M_\varphi \widehat s_{z_n}, \widehat s_{w_n} \rangle|^2
    = |\langle \widehat s_{z_n}, \widehat s_{w_n} \rangle|^2
    = 1 - d_{\cH} (z_n,w_n)^2 \ge 1 - \frac{1}{4} = \frac{3}{4},
  \end{equation*}
  which finishes the proof.
\end{proof}

Suppose that $s, \ell$ are reproducing kernels in the open unit disc $\D$ such that $\ell/s$ is positive semi-definite and $s$ is a normalized complete Pick kernel such that $z \in \Mult(\mathcal H_s)$. Assume further that there is $0\ne f\in \h_s$ such that for every $g \in \h_\ell$ we have $\|z^n g\|_{\h_\ell}/\|z^n f\|_{\h_s} \to 0$ as $n\to \infty$. This is satisfied if e.g. $\h_\ell$ is a weighted Bergman space and $s$ is the Szeg\"{o} kernel or the kernel for the Dirichlet space.

Then no $\varphi \in \m(\h_s,\h_\ell)$ can define a multiplication operator $\h_s\to \h_\ell$ with closed range. That is because such a  multiplication operator would have to be bounded below and  this would contradict the hypothesis
$$0< c \le \frac{\|z^n \varphi f\|_{\h_\ell}}{\|z^n f\|_{\h_s}}\to 0 \ \ \text{ as }n \to \infty.$$

Nevertheless it follows from the results on interpolating sequences between spaces that as long as $\|s_z\|$ is unbounded on $\D$ there always are non-compact multiplication operators $\h_s\to \h_\ell$. In the case where $\h_s=D$ is the Dirichlet space and $\h_\ell=L^2_a$ is the Bergman space this is Lech's result, \cite[Theorem 2]{LechEssNormal}. By use of the results of Section \ref{sece} we can give a short proof.

Indeed, if $(\lambda_i)$ is an interpolating sequence for $\h_s$, then by Lemma \ref{GrammianLemma2} (c),
 it must also be an interpolating sequence for $\h_\ell$. That means that $\hat{s}_{\lambda_i}$ is a Riesz sequence for $\h_s$ and $\hat{\ell}_{\lambda_i}$ is a Riesz sequence for $\h_\ell$. Thus the operator $T: \bigvee_i \{\hat{\ell}_{\lambda_i}\} \to \bigvee_i \{\hat{s}_{\lambda_i}\}$ defined by $\hat{\ell}_{\lambda_i}\to \hat{s}_{\lambda_i}$ is bounded and invertible, hence it is noncompact. Since $(s,\ell)$ has the Pick property $T$ can be extended to $M_\varphi^*$ for some $\varphi\in \m(\h_s,\h_\ell)$. Clearly $M_\varphi$ is not compact.

A positive answer to the following question would say that the analogue of Theorem \ref{CNPintThm} holds in the context of Theorem \ref{MixedInterpolation}. Theorem \ref{thmsst} and Example \ref{ExNotSeparated} provide situations where this is the case.
\begin{question} If $s$ is a normalized complete Pick kernel, and if $\ell=g s$ for some positive semi-definite kernel $g$, then is it true that a sequence $(\lambda_i)\subseteq X $ is interpolating for $\m(\h_s,\h_\ell)$ if and only if  $(\lambda_i) $ is a Carleson sequence for $\h_s$ and is  $\h_\ell$  separated?
\end{question}

\bibliography{references}

@String { acta             = {Acta Math.} }

@String { ajm              = {Amer. J. Math.} }

@String { ieot             = {Integral Equations and Operator Theory} }

@String { jfa              = {J. Funct. Anal.} }

@String { maccarthy        = {J.E. M{\raise.45ex\hbox{c}C}arthy} }

@String { maccarthyshalit  = {J.E. M{\raise.45ex\hbox{c}C}arthy and O.M. Shalit} }

@String { maccullough      = {S.A. M{\raise.45ex\hbox{c}C}ullough} }

@String { pams             = {Proc. Amer. Math. Soc.} }

@String { tams             = {Trans. Amer. Math. Soc.} }

@Article{mcc04,
  Title                    = {{Hilbert spaces of Dirichlet series and their multipliers}},
  Author                   = maccarthy,
  Journal                  = tams,
  Year                     = {2004},
  Number                   = {3},
  Volume                   = {356}
}

@Article{mcsh15,
  Title                    = {Spaces of {Dirichlet} series with the complete {Pick} property},
  Author                   = maccarthyshalit,
  Journal                  = {Israel J. Math},
  Year                     = {2017},
  Number                   = {2},
  Pages                    = {509--530},
  Volume                   = {220}
}

@Article{mccul92,
  Title                    = {Carath\'eodory interpolation kernels},
  Author                   = maccullough,
  Journal                  = ieot,
  Year                     = {1992},
  Number                   = {1},
  Pages                    = {43--71},
  Volume                   = {15}
}

@Article{ag90b,
  Title                    = {{Nevanlinna-Pick interpolation on Sobolev space}},
  Author                   = {J. Agler},
  Journal                  = pams,
  Year                     = {1990},
  Pages                    = {341--351},
  Volume                   = {108}
}

@InCollection{agmc_loc,
  Title                    = {{Nevanlinna-Pick} Kernels and Localization},
  Author                   = {J. Agler and J.E. M\raise.45ex\hbox{c}Carthy},
  Booktitle                = {Proceedings of 17th InternationalConference on Operator Theory at {Timisoara},1998},
  Publisher                = {Theta Foundation},
  Year                     = {2000},

  Address                  = {Bucharest},
  Editor                   = {A. Gheondea and R.N. Gologan and D. Timotin},
  Pages                    = {1-20}
}

@Book{ampi,
  Title                    = {{P}ick Interpolation and {H}ilbert Function Spaces},
  Author                   = {J. Agler and J.E. M\raise.45ex\hbox{c}Carthy},
  Publisher                = {American Mathematical Society},
  Year                     = {2002},

  Address                  = {Providence}
}

@Article{agmc_cnp,
  Title                    = {Complete {Nevanlinna-Pick} Kernels},
  Author                   = {J. Agler and J.E. M\raise.45ex\hbox{c}Carthy},
  Journal                  = jfa,
  Year                     = {2000},
  Number                   = {1},
  Pages                    = {111-124},
  Volume                   = {175}
}

@Article{at03,
  Title                    = {A von {Neumann} type inequality for certain domains in {${\mathbf C}^n$}},
  Author                   = {C.-G. Ambrozie and D. Timotin},
  Journal                  = pams,
  Year                     = {2003},
  Pages                    = {859--869},
  Volume                   = {131}
}

@Article{an79,
  Title                    = {Extensions, restrictions, and representations of states on {$C^{\ast} $}-algebras},
  Author                   = {Anderson, J.},
  Journal                  = {Trans. Amer. Math. Soc.},
  Year                     = {1979},
  Number                   = {2},
  Pages                    = {303--329},
  Volume                   = {249},

  Coden                    = {TAMTAM},
  Doi                      = {10.2307/1998793},
  Fjournal                 = {Transactions of the American Mathematical Society},
  ISSN                     = {0002-9947},
  Mrclass                  = {46L30},
  Mrnumber                 = {525675},
  Mrreviewer               = {Christopher Lance},
  Url                      = {http://dx.doi.org/10.2307/1998793}
}

@Article{ars08,
  Title                    = {Carleson measures for the {D}rury-{A}rveson {H}ardy space and other {B}esov-{S}obolev spaces on complex balls},
  Author                   = {Arcozzi, N. and Rochberg, R. and Sawyer, E.},
  Journal                  = {Adv. Math.},
  Year                     = {2008},
  Number                   = {4},
  Pages                    = {1107--1180},
  Volume                   = {218},

  Coden                    = {ADMTA4},
  Doi                      = {10.1016/j.aim.2008.03.001},
  Fjournal                 = {Advances in Mathematics},
  ISSN                     = {0001-8708},
  Mrclass                  = {46E15 (32A35 46E35)},
  Mrnumber                 = {2419381},
  Mrreviewer               = {Sandrine Grellier},
  Url                      = {http://dx.doi.org/10.1016/j.aim.2008.03.001}
}

@Article{arp95,
  Title                    = {Factorization and reflexivity on {Fock} spaces},
  Author                   = {A. Arias and G. Popescu},
  Journal                  = ieot,
  Year                     = {1995},
  Pages                    = {268-286},
  Volume                   = {23}
}

@Article{arv98,
  Title                    = {Subalgebras of {C*-algebras III}: {Multivariable} Operator Theory},
  Author                   = {W.B. Arveson},
  Journal                  = acta,
  Year                     = {1998},
  Pages                    = {159--228},
  Volume                   = {181}
}

@Article{ax92,
  Title                    = {Interpolation by multipliers of the {Dirichlet} space},
  Author                   = {S. Axler},
  Journal                  = {Quart. J. Math. Oxford Ser. 2},
  Year                     = {1992},
  Pages                    = {409--419},
  Volume                   = {43}
}

@Article{boe05,
  Title                    = {An interpolation theorem for {H}ilbert spaces with {N}evanlinna-{P}ick kernel},
  Author                   = {B{\o}e, B.},
  Journal                  = {Proc. Amer. Math. Soc.},
  Year                     = {2005},
  Number                   = {7},
  Pages                    = {2077--2081 (electronic)},
  Volume                   = {133},

  Coden                    = {PAMYAR},
  Doi                      = {10.1090/S0002-9939-05-07722-1},
  Fjournal                 = {Proceedings of the American Mathematical Society},
  ISSN                     = {0002-9939},
  Mrclass                  = {30E05 (30H05 46E22)},
  Mrnumber                 = {2137874},
  Mrreviewer               = {Joaquim Ortega Cerd{\`a}},
  Url                      = {http://dx.doi.org/10.1090/S0002-9939-05-07722-1}
}

@Article{boe02,
  Title                    = {Interpolating sequences for {Besov} spaces},
  Author                   = {B{\o}e, B.},
  Journal                  = jfa,
  Year                     = {2002},
  Pages                    = {319--341},
  Volume                   = {192}
}

@Article{BB17,
  Title                    = {Contractive multipliers from {H}ardy space to weighted {H}ardy space},
  Author                   = {Ball, Joseph A. and Bolotnikov, Vladimir},
  Journal                  = {Proc. Amer. Math. Soc.},
  Year                     = {2017},
  Number                   = {6},
  Pages                    = {2411--2425},
  Volume                   = {145},

  Doi                      = {10.1090/proc/13549},
  Fjournal                 = {Proceedings of the American Mathematical Society},
  ISSN                     = {0002-9939},
  Mrclass                  = {30H10 (30E05 30J05 46E22 47A57)},
  Mrnumber                 = {3626500},
  Mrreviewer               = {E. S. Dubtsov},
  Timestamp                = {2017.09.15}
}

@InCollection{btv01,
  Title                    = {Interpolation and Commutant Lifting formultipliers on reproducing kernel {Hilbert} spaces},
  Author                   = {J.A. Ball and T.T. Trent and V. Vinnikov},
  Booktitle                = {Operator Theory and Analysis},
  Publisher                = {{Birkh\"auser}},
  Year                     = {2001},

  Address                  = {Basel},
  Pages                    = {89--138},
  Series                   = {Oper. Theory Adv. Appl.},
  Volume                   = {122}
}

@Unpublished{bis,
  Title                    = {Interpolating sequences for the {Dirichlet} space and its multipliers},
  Author                   = {C. Bishop},
  Note                     = {Preprint},
  Year                     = {1994}
}

@Article{bdk07,
  Title                    = {Sampling and interpolation in large {B}ergman and {F}ock spaces},
  Author                   = {Borichev, A. and Dhuez, R. and Kellay, K.},
  Journal                  = {J. Funct. Anal.},
  Year                     = {2007},
  Number                   = {2},
  Pages                    = {563--606},
  Volume                   = {242},

  Coden                    = {JFUAAW},
  Doi                      = {10.1016/j.jfa.2006.09.002},
  Fjournal                 = {Journal of Functional Analysis},
  ISSN                     = {0022-1236},
  Mrclass                  = {30E05 (30D15 30D55 46E15 47A15)},
  Mrnumber                 = {2274822},
  Mrreviewer               = {William Thomas Ross},
  Url                      = {http://dx.doi.org/10.1016/j.jfa.2006.09.002}
}

@Article{car58,
  Title                    = {An interpolation problem for bounded analytic functions},
  Author                   = {L. Carleson},
  Journal                  = ajm,
  Year                     = {1958},
  Pages                    = {921--930},
  Volume                   = {80}
}

@Article{dapi98,
  Title                    = {{Nevanlinna-Pick} Interpolation for non-commutative analytic {Toeplitz} algebras},
  Author                   = {K.R. Davidson and D.R. Pitts},
  Journal                  = ieot,
  Year                     = {1998},
  Pages                    = {321--337},
  Volume                   = {31}
}

@Article{dhs15,
  Title                    = {Multipliers of embedded discs},
  Author                   = {Davidson, K. R. and Hartz, M. and Shalit, O. M.},
  Journal                  = {Complex Anal. Oper. Theory},
  Year                     = {2015},
  Number                   = {2},
  Pages                    = {287--321},
  Volume                   = {9},

  Doi                      = {10.1007/s11785-014-0360-8},
  Fjournal                 = {Complex Analysis and Operator Theory},
  ISSN                     = {1661-8254},
  Mrclass                  = {47L30 (46E22 47A13)},
  Mrnumber                 = {3311940},
  Url                      = {http://dx.doi.org/10.1007/s11785-014-0360-8}
}

@Article{dru78,
  Title                    = {A generalization of {von Neumann's} inequality to the complex ball},
  Author                   = {S.W. Drury},
  Journal                  = pams,
  Year                     = {1978},
  Pages                    = {300--304},
  Volume                   = {68}
}

@Book{gar81,
  Title                    = {Bounded Analytic Functions},
  Author                   = {J. B. Garnett},
  Publisher                = {Academic Press},
  Year                     = {1981},

  Address                  = {New York}
}

@Article{Hartz15,
  Title                    = {On the isomorphism problem for multiplier algebras of {N}evanlinna-{P}ick spaces},
  Author                   = {M. Hartz},
  Journal                  = {Canad. J. Math.},
  Year                     = {2017},
  Number                   = {1},
  Pages                    = {54--106},
  Volume                   = {69}
}

@Article{LechEssNormal,
  Title                    = {Essentially normal multiplication operators on the {D}irichlet space},
  Author                   = {Lech, J.},
  Journal                  = {Michigan Math. J.},
  Year                     = {1995},
  Number                   = {1},
  Pages                    = {127--140},
  Volume                   = {42},

  Doi                      = {10.1307/mmj/1029005158},
  Fjournal                 = {The Michigan Mathematical Journal},
  ISSN                     = {0026-2285},
  Mrclass                  = {47B38 (46E15)},
  Mrnumber                 = {1322194},
  Mrreviewer               = {D. Sarason},
  Url                      = {http://dx.doi.org/10.1307/mmj/1029005158}
}

@Article{lu76,
  Title                    = {Models for commuting contractions},
  Author                   = {Lubin, A.},
  Journal                  = {Michigan Math. J.},
  Year                     = {1976},
  Number                   = {2},
  Pages                    = {161--165},
  Volume                   = {23},

  Fjournal                 = {The Michigan Mathematical Journal},
  Fmrnumber                = {0412850 (54 \#971)},
  ISSN                     = {0026-2285},
  Mrclass                  = {47A45},
  Mrreviewer               = {D. N. Clark}
}

@Article{mss15,
  Title                    = {Interlacing families {II}: {M}ixed characteristic polynomials and the {K}adison-{S}inger problem},
  Author                   = {Marcus, A. W. and Spielman, D. A. and Srivastava, N.},
  Journal                  = {Ann. of Math. (2)},
  Year                     = {2015},
  Number                   = {1},
  Pages                    = {327--350},
  Volume                   = {182},

  Doi                      = {10.4007/annals.2015.182.1.8},
  Fjournal                 = {Annals of Mathematics. Second Series},
  ISSN                     = {0003-486X},
  Mrclass                  = {46L05 (42A05 46B03 46L30)},
  Mrnumber                 = {3374963},
  Mrreviewer               = {Robert S. Doran},
  Url                      = {http://dx.doi.org/10.4007/annals.2015.182.1.8}
}

@Unpublished{marsun,
  Title                    = {Interpolating sequences for the multipliers of the {Dirichlet} space},
  Author                   = {D. Marshall and C. Sundberg},
  Note                     = {Preprint; see http://www.math.washington.edu/$\sim$marshall/preprints/interp.pdf},
  Year                     = {1994}
}

@Article{ol11,
  Title                    = {Local properties of {H}ilbert spaces of {D}irichlet series},
  Author                   = {Olsen, J.-F.},
  Journal                  = {J. Funct. Anal.},
  Year                     = {2011},
  Number                   = {9},
  Pages                    = {2669--2696},
  Volume                   = {261},

  Coden                    = {JFUAAW},
  Doi                      = {10.1016/j.jfa.2011.07.007},
  Fjournal                 = {Journal of Functional Analysis},
  Fmrnumber                = {2826411 (2012h:46046)},
  ISSN                     = {0022-1236},
  Mrclass                  = {46E20 (11N05 30B50 32A35 42B30)},
  Mrreviewer               = {Daniel M. Pellegrino},
  Url                      = {http://dx.doi.org/10.1016/j.jfa.2011.07.007}
}

@Article{os08,
  Title                    = {Local interpolation in {H}ilbert spaces of {D}irichlet series},
  Author                   = {Olsen, J.-F. and Seip, K.},
  Journal                  = {Proc. Amer. Math. Soc.},
  Year                     = {2008},
  Number                   = {1},
  Pages                    = {203--212 (electronic)},
  Volume                   = {136},

  Coden                    = {PAMYAR},
  Doi                      = {10.1090/S0002-9939-07-08955-1},
  Fjournal                 = {Proceedings of the American Mathematical Society},
  Fmrnumber                = {2350405 (2008j:30004)},
  ISSN                     = {0002-9939},
  Mrclass                  = {30B50 (30E05 30H05 42B30 46E20 46E22)},
  Mrreviewer               = {P. Lappan},
  Url                      = {http://dx.doi.org.libproxy.wustl.edu/10.1090/S0002-9939-07-08955-1}
}

@Book{para16,
  Title                    = {An introduction to the theory of reproducing kernel {Hilbert} spaces},
  Author                   = {V.I. Paulsen and M. Raghupathi},
  Publisher                = {Cambridge University Press},
  Year                     = {2016},

  Address                  = {Cambridge}
}

@Article{qui93,
  Title                    = {For which reproducing kernel {Hilbert spacesis Pick's theorem true?}},
  Author                   = {P. Quiggin},
  Journal                  = ieot,
  Year                     = {1993},
  Number                   = {2},
  Pages                    = {244--266},
  Volume                   = {16}
}

@InCollection{sei09,
  Title                    = {Interpolation by {D}irichlet series in {$H^\infty$}},
  Author                   = {Seip, K.},
  Booktitle                = {Linear and complex analysis},
  Publisher                = {Amer. Math. Soc., Providence, RI},
  Year                     = {2009},
  Pages                    = {153--164},
  Series                   = {Amer. Math. Soc. Transl. Ser. 2},
  Volume                   = {226},

  Fmrnumber                = {2500517 (2011a:30088)},
  Mrclass                  = {30E05 (30B50 30H05 32A35)},
  Mrreviewer               = {P. Lappan}
}

@Book{SeipIntSamp,
  Title                    = {Interpolation and sampling in spaces of analytic functions},
  Author                   = {Seip, K.},
  Publisher                = {American Mathematical Society, Providence, RI},
  Year                     = {2004},
  Series                   = {University Lecture Series},
  Volume                   = {33},

  Doi                      = {10.1090/ulect/033},
  ISBN                     = {0-8218-3554-8},
  Mrclass                  = {30E05 (30D45 30D55 46E15 46E20 47B35 94A20)},
  Mrnumber                 = {2040080},
  Mrreviewer               = {Richard Rochberg},
  Pages                    = {xii+139},
  Url                      = {http://dx.doi.org/10.1090/ulect/033}
}

@Article{shashi62,
  Title                    = {On the zeroes of functions with finite {Dirichlet} integral and some related function spaces},
  Author                   = {H.S. Shapiro and A.L. Shields},
  Journal                  = {Math. Z.},
  Year                     = {1962},
  Pages                    = {217--229},
  Volume                   = {80}
}

@Article{SS61,
  Title                    = {On some interpolation problems for analytic functions},
  Author                   = {Shapiro, H. S. and Shields, A. L.},
  Journal                  = {Amer. J. Math.},
  Year                     = {1961},
  Pages                    = {513--532},
  Volume                   = {83},

  Fjournal                 = {American Journal of Mathematics},
  ISSN                     = {0002-9327},
  Mrclass                  = {30.36},
  Mrnumber                 = {0133446 (24 \#A3280)},
  Mrreviewer               = {D. S. Greenstein},
  Timestamp                = {2013.07.08},
  Url                      = {http://www.jstor.org/stable/pdfplus/2372892.pdf}
}

@Article{Shimorin,
  Title                    = {Complete {N}evanlinna-{P}ick property of {D}irichlet-type spaces},
  Author                   = {Shimorin, S.},
  Journal                  = {J. Funct. Anal.},
  Year                     = {2002},
  Number                   = {2},
  Pages                    = {276--296},
  Volume                   = {191},

  Doi                      = {10.1006/jfan.2001.3871},
  Fjournal                 = {Journal of Functional Analysis},
  ISSN                     = {0022-1236},
  Mrclass                  = {47A57 (30E05 31C25 46E22)},
  Mrnumber                 = {1911187},
  Mrreviewer               = {John E. McCarthy},
  Url                      = {http://dx.doi.org/10.1006/jfan.2001.3871}
}

@Article{tre04,
  Title                    = {A corona theorem for multipliers on {D}irichlet space},
  Author                   = {Trent, T. T.},
  Journal                  = {Integral Equations Operator Theory},
  Year                     = {2004},
  Number                   = {1},
  Pages                    = {123--139},
  Volume                   = {49},

  Doi                      = {10.1007/s00020-002-1196-6},
  Fjournal                 = {Integral Equations and Operator Theory},
  ISSN                     = {0378-620X},
  Mrclass                  = {30H05 (31C25 46E22 46J15)},
  Mrnumber                 = {2057771},
  Mrreviewer               = {Raymond Mortini},
  Url                      = {http://dx.doi.org.libproxy.wustl.edu/10.1007/s00020-002-1196-6}
}

@Article{vw12,
  Title                    = {Bergman-type singular integral operators and the characterization of {C}arleson measures for {B}esov-{S}obolev spaces and the complex ball},
  Author                   = {Volberg, A. and Wick, B. D.},
  Journal                  = {Amer. J. Math.},
  Year                     = {2012},
  Number                   = {4},
  Pages                    = {949--992},
  Volume                   = {134},

  Coden                    = {AJMAAN},
  Doi                      = {10.1353/ajm.2012.0028},
  Fjournal                 = {American Journal of Mathematics},
  ISSN                     = {0002-9327},
  Mrclass                  = {42B25 (32A37)},
  Mrnumber                 = {2956255},
  Mrreviewer               = {Carmen Cascante},
  Url                      = {http://dx.doi.org/10.1353/ajm.2012.0028}
}

{\it Email of corresponding author: mccarthy@wustl.edu}
\end{document}